\theoremstyle{plain}
\newtheorem{Theorem}{Theorem}   [section]
\newtheorem{Lemma}[Theorem]   {Lemma}
\newtheorem{Definition}[Theorem] {Definition}
\newtheorem{Example}{Example}
\begin{document}

\title{Metric dimension of combinatorial game graphs}
\author{Craig Tennenhouse\\{\small University of New England, Biddeford, ME 04005, USA}\\ {\small ctennenhouse@une.edu}}
\date{}

\maketitle

\begin{abstract}
The study of combinatorial games is intimately tied to the study of graphs, as any game can be realized as a directed graph in which players take turns traversing the edges until reaching a sink. However, there have heretofore been few efforts towards analyzing game graphs using graph theoretic metrics and techniques.

A set $S$ of vertices in a graph $G$ \emph{resolves} $G$ if every vertex in $G$ is uniquely determined by the vector of its distances from the vertices in $S$. 	A \emph{metric basis} of $G$ is a smallest resolving set and the \emph{metric dimension} is the cardinality of a metric basis. In this article we examine the metric dimension of the graphs resulting from some rulesets, including both \emph{short} games (those which are sure to end after finitely many turns) and \emph{loopy} games (those games for which the associated graph contains cycles).  
\end{abstract}

\section{Introduction}\label{sec:intro}

A \emph{combinatorial game} is a turn-based game with two players wherein the last player to make a legal move wins. Its study, Combinatorial Game Theory (CGT), began with Bouton in \cite{bouton1901nim} but its modern foundations begin with Conway, Berlekamp, and Guy\cite{berlekamp2018winning}, and later \cite{albert2007lessons,siegel2013combinatorial}. 

The language of CGT differs slightly from colloquial language surrounding games. A \emph{ruleset} is a collection of rules governing game play, (e.g. \textsc{chess}, \textsc{checkers}, and \textsc{tic-tac-toe}), while a \emph{game} is a particular ruleset paired with a position. However, we will often use the terms \emph{game} and \emph{ruleset} interchangeably when the meaning is clear from context. We include a few necessary terms in this section.

A combinatorial game in which players have different move options, pieces, and/or goals is a \emph{partisan} game. A game that is not partisan is \emph{impartial}.

Length of play is another way to categorize a ruleset. A game is \emph{short} if it always ends within a finite number of turns. A game is \emph{loopy} if a position can be reached at more than one point throughout play.

The most well-known impartial short combinatorial game is \textsc{nim}. In \textsc{nim} players are presented with a multiset of heap sizes and take turns removing any positive number of stones from any one heap. The player who removes the final stone is the winner. Bouton \cite{berlekamp2018winning} determined the complete solution. 

Throughout we will use common notation and language conventions found in \cite{west1996introduction}. In particular, the following definitions will be useful in our discussion.

\begin{Definition}\label{def:simplegraph}
A \emph{graph} $G$ is an ordered pair $(V,E)$ where $E$ is a subset of the Cartesian product $V\times V$. If $G$ is \emph{simple} then $E$ is a set of unordered pairs of distinct elements, and if $G$ is a \emph{digraph} then the elements are ordered. An \emph{oriented graph} is a digraph in which $(u,v) \in E \Rightarrow (v,u)\notin E$. The \emph{underlying graph} of an oriented graph is the result of unordering the ordered pairs in $E$. An oriented graph is a \emph{tournament} if for all pairs $u,v\in V$ either $(u,v)$ or $(v,u)$ is in $E$. A \emph{source} is a vertex with no edges directed into it and a \emph{sink} has no edges directed away. We sometimes simplify notation by writing $v\in G$ and $e\in G$ instead of $v\in V(G), e\in E(G)$, respectively. 

The \emph{order} $n(G)$ of a graph $G=(V,E)$ is the cardinality of $V$, and its \emph{size} $e(G) = |E|$. The \emph{distance} $d(u,v)$ between $u$ and $v$ in a simple graph is the number of edges in a shortest path between the vertices. 
\end{Definition}

In a directed graph a path must be directed. Therefore, even if the underlying graph of a directed graph is connected, it's possible that $d(u,v)=\infty$; that is, there is no directed $uv$-path. Since the graphs that we will address which are associated with combinatorial games are often not strongly-connected, we introduce the following convention.

\begin{Definition}\label{def:signeddist}
In a digraph the \emph{signed distance} is defined in the following way:\\
$$
d_{\pm}(u,v) = 
\begin{cases}
	d(u,v) & \text{if } \exists \text{a } uv\text{-path and } d(u,v)\leq d(v,u) \\
	-d(v,u) & \text{if } \exists \text{a } vu\text{-path and } d(v,u)<d(u,v) \\
	\infty & \text{if } \text{no } uv\text{-path nor } vu\text{-path } exists
\end{cases}
$$
\end{Definition}

Since this convention will be used throughout we will simply use $d(u,v)$ to denote both regular and signed distance as context requires.

	
Often while discussing the play of a game, players wish to refer to a move as being within $n$-many moves of completion, e.g. \emph{``Checkmate in three.''} This motivates us to examine the simplest way to describe positions of a game using distances to other ``landmark'' positions. It is this endeavor which motivates us to study the metric dimension of game graphs.
	
Slater \cite{slater1975leaves} and Harrary \& Melter \cite{harary1976metric}	introduced the following concepts, which we have modified slightly for our purposes. 

\begin{Definition}\label{Resolving set}
In a graph $G=(V,E)$ with ordered subset $S = (u_1,u_2,\ldots ,u_k) \subseteq V$, the \emph{distance vector} of a vertex $v\in G$ is the ordered set of distances from vertices in $S$ to $v$. That is, 
$$d(S|v) = (d(u_1,v),d(u_2,v),\ldots ,d(u_k,v))$$
If every vertex in $G$ is uniquely determined by its distance vector then $S$ is a \emph{resolving set} for $G$, or $S$ \emph{resolves} $G$.

A smallest resolving set in $G$ is a \emph{metric basis}, and the cardinality of a metric basis is the graph's \emph{metric dimension}, denoted $\beta(G)$. If a set of vertices $V$ share a common distance vector via set $S$ then we say that $S$ does not \emph{disambiguate} $V$, or that $V$ is \emph{ambiguous} with respect to $S$.
\end{Definition}

There has been considerable work on determining the metric dimensions of various families of graphs, including wheels \cite{sooryanaranyana2002metric} and hypercubes \cite{erdos1963two,beardon2013resolving}. In addition, it has been shown in \cite{foucaud2015algorithms} that determining the metric dimension of graphs in general is computationally hard.

In this paper we address connections between combinatorial games and metric dimension. In Section~\ref{sec:gamegraphs} we demonstrate how graphs associated with combinatorial games can be constructed using various rulesets. In Section~\ref{sec:nim} we determine the metric dimensions of graphs associated with \textsc{nim}. Another short combinatorial game is addressed in Section~\ref{sec:takeaway}. Some loopy games are examined in Section~\ref{sec:loopy}, and we conclude with some open questions on metric dimension and combinatorial game graphs in Section~\ref{sec:conclusion}.

\section{Game graphs}\label{sec:gamegraphs}

Consider a short combinatorial ruleset \textsc{R} and a starting position $\mathcal{G}$. Traditionally the graph associated with \text{R} is a tree directed from the root $\mathcal{G}$ and with vertices associated with positions, where players take turns traversing one edge at a time until reaching a leaf vertex. We modify this associated graph slightly and say that the \emph{game graph} $G($\textsc{R},$\mathcal{G})$ is the digraph resulting from identifying vertices in this oriented tree that are associated with the same position. If the ruleset is obvious we omit \textsc{R} and simply write $G(\mathcal{G})$. In the case of partisan games we ignore partisan options and consider options available to either player as available to both.

For example, the reader is likely aware of the impartial game \textsc{nim}, wherein players take turns removing any positive number of stones from any single heap until all heaps are empty. A single \textsc{nim} heap with $n$ stones is denoted $*n$. The game graph associated with a single \textsc{nim} heap of size $3$, $G(\textsc{nim},*3)$, is the acyclic tournament on $4$ vertices $\{0,*,*2,*3\}$. 

Note that game graphs associated with short games will be acyclic, and for loopy games will contain either loops or cycles.

For some initial examples let us consider some combinatorial games in canonical form. 

\begin{Definition}\label{def:cform}
A game is in \emph{canonical form} if there are no dominated options and no reversible moves.  
\end{Definition}
That is, in a game noted $\{L|R\}$ where $L$ is the set of Left's options and $R$ the set of Right's, $L$ and $R$ are as simple as possible in order to contain all important information about a game.

\begin{Example}
Recall that under standard conventions of combinatorial game theory, the canonical form of the game $n \in \mathbb{Z}\setminus \{0\}$ is $\{n-1|\}$, and the game $-n = \{|-n+1\}$. So $G(-n) \cong G(n)$ is a directed path on the $n+1$ vertices $\{0,1,2,\ldots ,n\}$.
\end{Example}

\begin{Example}
The game $\{x|y\}$ with $x<y$ has value equal to $\frac{p}{2^n}$ for integers $p,n$ with $2\nmid p$, where $\frac{p}{2^n}$ is the simplest dyadic rational between $x$ and $y$. That is, $n$ is the least positive integer such that $x<\frac{p}{2^n}<y$. The game graph $G(\frac{p}{2})$ is a transitive tournament on three vertices with a directed path graph attached.  
\end{Example}




The graphs of other dyadic rationals can similarly be computed, along with other canonical form games including infinitessimals and switches.

It is worth noting that not all graphs admit an orientation that is the game graph of some canonical form game. As an example, consider any graph $H$ with three vertices $u,v,w$, of degree one. Any orientation of $H$ with a single source (as no game graph can have more than one source) will therefore result in at least two sinks. Since the only sink in a canonical form game is the position $0$, such a graph is impossible. However, there are a number of rulesets for which a game graph may contain a number of distinct sinks, representing different game positions with game value $0$.

\section{Metric dimension of Nim}\label{sec:nim}

We now consider game graphs for the game \textsc{nim}, as described in Section~\ref{sec:intro}. We will denote by \textsc{nim}$(H)$ the game of \textsc{nim} with heap sizes given by the multiset $H=\{h_1,h_2,\ldots ,h_k\}$ . For simplicity we will denote this graph by $G(h_1,h_2,\ldots ,h_k)$ and assume that $h_i\leq h_j \forall i<j$. 

First we examine the single-heap game. Recall that a single heap is denoted $*n$ so we simplify the graph notation further to $G(*n)$.

\begin{Theorem}\label{thm:nim1}
For $n\in \mathbb{Z}, n\geq 1$ the graph $G(\textsc{nim},*n)$ has order $n+1$, size $\binom{n+2}{2}$, and metric dimension $\lceil \frac{n}{2}\rceil$.
\end{Theorem}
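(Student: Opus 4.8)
The plan is to exploit the extremely simple metric structure of $G(\textsc{nim},*n)$. Write the vertex set as $\{0,*,*2,\dots,*n\}$ and, for brevity, identify $*i$ with the integer $i$ (so $0$ is the empty heap). From a heap of size $i$ a player may remove any of $1,2,\dots,i$ stones and can never enlarge a heap, so the edges are precisely the pairs $i\to j$ with $i>j$; the order ($n+1$) and the size are then immediate by summing out-degrees, since $\deg^{+}(*i)=i$. The crucial point is that, because heaps only shrink, there is a directed $i$-to-$j$ path exactly when $i>j$, and in that case the edge $i\to j$ itself is a shortest path. Hence, under the signed-distance convention of Definition~\ref{def:signeddist},
\[
d(i,j)=\begin{cases}1 & i>j,\\ 0 & i=j,\\ -1 & i<j,\end{cases}
\]
i.e.\ $d(i,j)=\operatorname{sgn}(i-j)$. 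Thus the distance vector of a vertex $v$ with respect to an ordered set $S$ carries no information beyond, for each $s\in S$, whether $v<s$, $v=s$, or $v>s$.

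I would then reduce the problem to a one-line extremal question about paths. \emph{Claim:} a set $S\subseteq\{0,1,\dots,n\}$ resolves $G(\textsc{nim},*n)$ if and only if no two consecutive integers of $\{0,\dots,n\}$ both lie outside $S$. For the forward direction, suppose $u$ and $u+1$ are both outside $S$; every $s\in S$ then satisfies $s\le u-1$ or $s\ge u+2$, so $\operatorname{sgn}(s-u)=\operatorname{sgn}\!\bigl(s-(u+1)\bigr)$ for all $s\in S$, the two vertices $u$ and $u+1$ share a distance vector, and $S$ fails to resolve. For the converse, assume no two consecutive vertices are omitted and take any $v<w$; then $w\ge v+1$, and by hypothesis at least one of $v,v+1$ — call it $s$ — lies in $S$, so $v\le s\le w$. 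Any such $s$ separates $v$ from $w$: since $s\ge v$ we have $\operatorname{sgn}(s-v)\ge 0$, since $s\le w$ we have $\operatorname{sgn}(s-w)\le 0$, and these cannot both vanish because $v\ne w$. Hence $S$ resolves $G(\textsc{nim},*n)$.

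Finally, I would read off the metric dimension. By the Claim, a metric basis is a smallest set whose complement in $\{0,1,\dots,n\}$ is an independent set of the path $P_{n+1}$ (consecutive integers adjacent); equivalently $\beta\bigl(G(\textsc{nim},*n)\bigr)=(n+1)-\alpha(P_{n+1})$, where $\alpha(P_{n+1})=\lceil (n+1)/2\rceil$ is the independence number of that path (attained, say, by the even integers $0,2,4,\dots$). Therefore $\beta\bigl(G(\textsc{nim},*n)\bigr)=(n+1)-\lceil (n+1)/2\rceil=\lfloor (n+1)/2\rfloor=\lceil n/2\rceil$; explicitly, the odd-labelled vertices $\{1,3,5,\dots\}\cap\{0,\dots,n\}$ form a metric basis, and no set of size $\lceil n/2\rceil-1$ can work since its complement, of size exceeding $\alpha(P_{n+1})$, must contain two consecutive vertices.

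The argument has no deep obstacle; the only step requiring genuine care is verifying that the signed-distance convention really does collapse every pairwise distance to $0$ or $\pm1$, and the closely related fact that a single element $s$ with $v\le s\le w$ always separates $v$ from $w$. Once the Claim is established, the dimension is a routine maximum-independent-set-in-a-path count, so the bulk of a careful write-up goes into the two directions of the Claim and the accompanying extremal bound.
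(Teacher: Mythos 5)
Your argument is correct and follows essentially the same route as the paper's: identify $G(\textsc{nim},*n)$ with the acyclic tournament on $\{0,1,\dots,n\}$ so that every signed distance is $0$ or $\pm 1$, observe that two consecutive omitted vertices can never be disambiguated, and exhibit the odd-indexed heaps as a resolving set; your explicit if-and-only-if characterization via independent sets of $P_{n+1}$ merely makes the counting cleaner than the paper's brief version. One small point worth flagging: your out-degree sum gives size $\binom{n+1}{2}$, not the $\binom{n+2}{2}$ asserted in the theorem (the tournament on $n+1$ vertices has $\binom{n+1}{2}$ arcs), so your computation actually corrects an error in the stated size rather than confirming it.
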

\begin{proof}
The set of vertices in $G(*n)$ is $\{0,1,\ldots ,n\}$, and since every position can reach every smaller position in a single move the graph $G(*n)$ is the acyclic tournament on $n+1$ vertices. Thus its size is $\binom{n+2}{2}$. 

If $S\subseteq V(G(*n))$ contains fewer than $\lceil \frac{n}{2}\rceil$ vertices then there is an $i$ so that neither $*i$ nor $*(i+1)$ are in $S$. Both of these vertices are signed distance $1$ from every vertex associated with a larger heap, and signed distance $-1$ from each smaller heap. Thus $G(*n)$ is not resolved by $S$. However, the set $S' = \{*(2k-1) : 1\leq k\leq \frac{n}{2}\}$ mitigates this issue and resolves the graph.
\end{proof}

We note that between the game graphs of integers we saw in Section~\ref{sec:gamegraphs} and game graphs of single-heap \textsc{nim} we achieve both possible extreme values of connected graph density, $n / \binom{n+1}{2}$ and $1$, respectively.


Now we consider games with two heaps. While the metric dimension of $G(H)$ is computable for every multiset $H$, we will restrict ourselves to only the diagonal positions, i.e. those graphs of the form $G(n,n)$ for some positive integer $n$, since every \textsc{nim} position is reachable from infinitely many diagonal positions. 

\begin{Theorem}\label{thm:nim2}
The graph $G=G(n,n)$ has order $\frac{1}{2}(n^2+3n+2)$, size $\frac{1}{2}n(n+1)^2$, and metric dimension $\lceil \frac{n}{2}\rceil$.
\end{Theorem}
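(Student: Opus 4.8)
The plan is to first nail down the combinatorics of $G(n,n)$. Its vertices are the heap‑multisets $\{a,b\}$ with $0\le a\le b\le n$, which I write as sorted pairs $(a,b)$; a sorted pair $(p,q)$ reaches $(a,b)$ exactly when $a\le p$ and $b\le q$, and when it does the distance equals the least number of heaps that must strictly decrease, minimised over the (at most two) ways of pairing source heaps with target heaps. Consequently every finite signed distance lies in $\{-2,-1,0,1,2\}$ and incomparable vertices are at signed distance $\infty$. The order is then $\binom{n+2}{2}=\tfrac12(n^2+3n+2)$, and summing out‑degrees — which are $a+b$ when $a<b$ and $a$ when $a=b$ — gives the size $\tfrac12 n(n+1)+\tfrac12 n^2(n+1)=\tfrac12 n(n+1)^2$.

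For the upper bound I would use only diagonal landmarks. For $(r,r)$ the two heap‑pairings have equal cost, so $d\big((r,r),(a,b)\big)$ depends only on $\big(\mathrm{sign}(a-r),\mathrm{sign}(b-r)\big)$, and the six sign‑pairs compatible with $a\le b$ — namely $(-,-),(-,0),(-,+),(0,0),(0,+),(+,+)$ — biject with the six values $2,1,\infty,0,-1,-2$. Hence the distance vector of $(a,b)$ to a family $\{(r_i,r_i)\}_i$ reveals $\mathrm{sign}(a-r_i)$ and $\mathrm{sign}(b-r_i)$ for every $i$; taking $\{r_i\}$ to be a metric basis of $G(*n)$ — of size $\lceil n/2\rceil$ by Theorem~\ref{thm:nim1} — and recalling that in $G(*n)$ the coordinate vector of an integer $m$ is $(\mathrm{sign}(r_i-m))_i$, this recovers both $a$ and $b$. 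So $\{(r_i,r_i)\}$ resolves $G(n,n)$ and $\beta(G(n,n))\le\lceil n/2\rceil$.

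The lower bound is the crux: a naive pigeonhole fails because a landmark $(p,q)$ with $p<q$ acts through both coordinates, so one must test a carefully chosen family. For $j=1,\dots,n$ put $A_j=\{(j,j),\,(j-1,j)\}$. A case split on whether $p\ge j$, $p=j-1$, or $p\le j-2$ shows that $(p,q)$ separates the two vertices of $A_j$ \emph{if and only if} $p=j-1$, or $q=j-1$, or $(p,q)=(j,j)$. Hence a resolving set $S$ must, for every $j$, contain a landmark with some coordinate equal to $j-1$ or else contain the vertex $(j,j)$. Writing $U=\pi_1(S)\cup\pi_2(S)$ and $D=\{j:(j,j)\in S\}$, this forces every element of $\{0,\dots,n-1\}$ to lie in $U$ or to be of the form $j-1$ with $j\in D$, so $n\le|U|+|D|$; and since $D\subseteq\pi_1(S)\cap\pi_2(S)$, inclusion–exclusion gives $|U|+|D|\le|\pi_1(S)|+|\pi_2(S)|\le 2|S|$. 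Thus $|S|\ge n/2$, i.e. $|S|\ge\lceil n/2\rceil$, and with the upper bound $\beta(G(n,n))=\lceil n/2\rceil$.

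The step I expect to fight hardest for is this exact separation criterion for $A_j$: ruling out the seductive false leads that a landmark with $p=j$ or with $q=j$ already separates $A_j$ (only the single vertex $(j,j)$, for which $q=j$, actually does). This hinges on the subtle feature of the distance function that the landmark's larger coordinate $q$ can ``save a move'' exactly when it equals the \emph{smaller} heap of $(j-1,j)$, i.e. when $q=j-1$; getting that bookkeeping right is precisely what keeps the bound from slipping to $\lceil n/4\rceil$.
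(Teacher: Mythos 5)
Your proposal is correct, and its first two thirds (the order and size counts, and the upper bound via the odd diagonal landmarks $\{(r,r):r \text{ odd}\}$, whose distance to $(a,b)$ encodes exactly $\mathrm{sign}(a-r)$ and $\mathrm{sign}(b-r)$) coincide with the paper's argument. Where you genuinely diverge is the lower bound. The paper fixes an $x$ with neither $(x,x)$ nor $(x+1,x+1)$ in $S$, argues that the triple $\{(x,x),(x,x+1),(x+1,x+1)\}$ can only be disambiguated by vertices with a coordinate in $\{x,x+1\}$, and then closes with the informal count ``each such vertex can be used at most twice, hence $|S|\ge\lceil n/2\rceil$.'' You instead run the test over \emph{every} $j$ with the pairs $A_j=\{(j,j),(j-1,j)\}$, prove the exact separation criterion ($(p,q)$ separates $A_j$ iff $j-1\in\{p,q\}$ or $(p,q)=(j,j)$ --- which I have checked case by case and is right, including the traps you flag at $p=j$ and $q=j$), and then convert ``every $j$ is served'' into the inequality $n\le |U|+|D|\le|\pi_1(S)|+|\pi_2(S)|\le 2|S|$ via the observation $D\subseteq\pi_1(S)\cap\pi_2(S)$. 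This buys a fully rigorous version of the step the paper leaves vague: the paper's ``used at most twice'' heuristic is exactly your statement that a landmark contributes at most two coordinates to $U$ (or one diagonal membership to $D$ at the cost of a repeated coordinate), and your inclusion--exclusion makes that airtight. The only thing to be sure to include in a final write-up is the short verification that the game-graph distance really is the minimum, over the two heap pairings, of the number of strictly decreasing heaps (with $\infty$ for incomparable positions); everything downstream depends on that formula.
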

\begin{proof}
The order is simply the combinations of possible heap sizes, of which there are $n+1$, with replacement, resulting in $\binom{n+2}{2} = \frac{1}{2}(n^2+3n+2)$. The size of the graph is easily computed by summing the out-degrees of all vertices. Every vertex associated with a diagonal position $\{k,k\}$ has out-degree $k$, and each one associated with a non-diagonal position $\{a,b\}$, $a<b$ has out-degree $a+b$. The total out-degree sum is 
$$\sum\limits_{0\leq a<b\leq n} (a+b) + \sum\limits_{k=1}^{n}k $$
which yields the value we want.

\begin{figure}
\centering

\includegraphics[width=.5\textwidth]{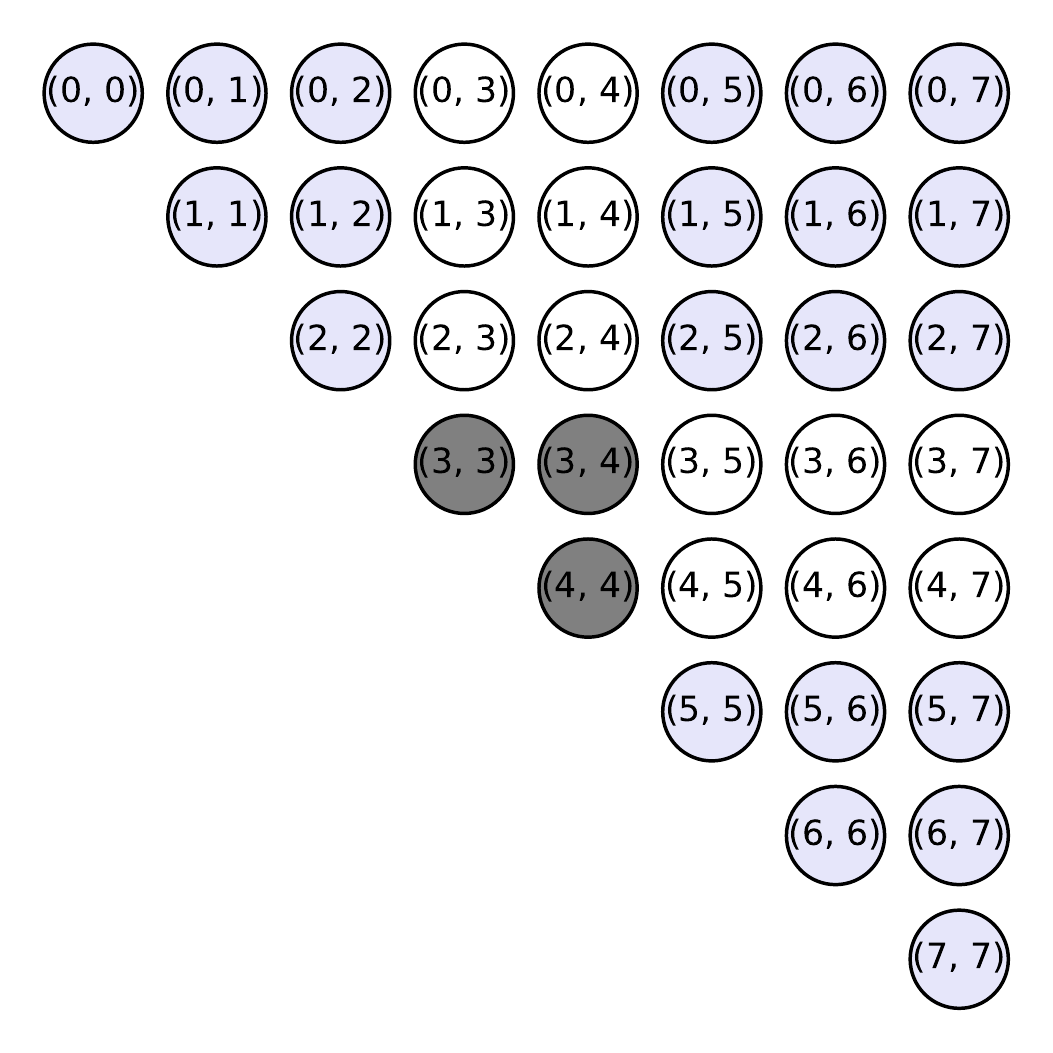}
\caption{Vertices of the game graph $G(\textsc{nim},(7,7))$ with $G', G'', G'''$ highlighted}\label{fig:nim2}
\end{figure}

Say that $S$ resolves the graph $G$ and neither $(x,x)$ nor $(x+1,x+1)$ is in $S$. Let $G'$ be the graph induced by the vertices representing games with both heaps smaller than $x$, and let $G''$ be the graph induced by the vertices of the form $(a,b)$ with $a\notin \{x,x+1\}$ and $b>(x+1)$ (Figure~\ref{fig:nim2}). No set of vertices in $G'$ disambiguates $\{(x,x),(x,x+1),(x+1,x+1)\}$ since the signed distance from each vertex in $G'$ is $-2$. Nor does any vertex in $G''$ disambiguate these vertices since all three have the same signed distance from each vertex in $G'$. If $v\in S$ of the form $(a,x),a<x$ then $v$ has the same distance to $(x,x)$ and $(x,x+1)$, and any vertex of the form $(a,x+1)$ has the same distance to $(x,x+1)$ and $(x+1,x+1)$. Similarly, if $b>(x+1)$ then $(x,b)$ does not disambiguate $(x,x),(x,x+1)$, nor does $(x+1,b)$ disambiguate $(x,x+1),(x+1,x+1)$. Therefore, either $(x,x+1)$ or at least two vertices in $G\setminus \{G',G''\}$ are in $S$. Since each vertex in $G\setminus \{G',G''\}$ represents two distinct heaps, each can be used at most twice to disambiguate diagonal positions. Hence $|S|\geq \lceil \frac{n}{2}\rceil$.

Now consider $S=\{(k,k):1\leq k\leq n \text{ odd}\}$. Then for $(x,x)\in S, (a,b)\in G$,
$$
d((x,x),(a,b)) = 
\begin{cases}
	0 & \text{if } a=b=x \\
	1 & \text{if } a<b=x \\
	-1 & \text{if } x=a<b \\
	2 & \text{if } a<b=x \\
	-2 & \text{if } x<a<b
\end{cases}
$$

If both $a$ and $b$ are odd then they are uniquely determined by the presence of $1$ and/or $(-1)$ in the distance vector. If both are even then their value is clear from the indices associated with the indices of $2$ and/or $(-2)$. If one is odd and the other even then, similarly, some combination of these values uniquely identifies the vertex $(a,b)$. Hence $S$ resolves $G$.
\end{proof}

	
%
%
%
%
	
\section{Another takeaway game}\label{sec:takeaway}
	
	
Let's examine a \textsc{nim} variant.

\begin{Definition}\label{def:hats}
Let $H$ be a multiset of positive integers. In \textsc{hats-nim} (\textbf{H}eaps \textbf{A}re \textbf{T}he \textbf{S}ubtraction sets) players take turns subtracting exactly $s$ from any one heap in $H$, where $s\in H$. 
\end{Definition}	

In \textsc{hats-nim} every single-heap game has a single move to $0$, hence the metric dimension of the associated graph is $1$ with either vertex comprising a metric basis. In Theorem~\ref{thm:hats} we determine the metric dimension of the associated graphs of all two-heap games. See Figure~\ref{fig:hats} for an example of the resulting game graph.

\begin{figure}
\centering
\hspace{1in} \includegraphics[width=.7\textwidth]{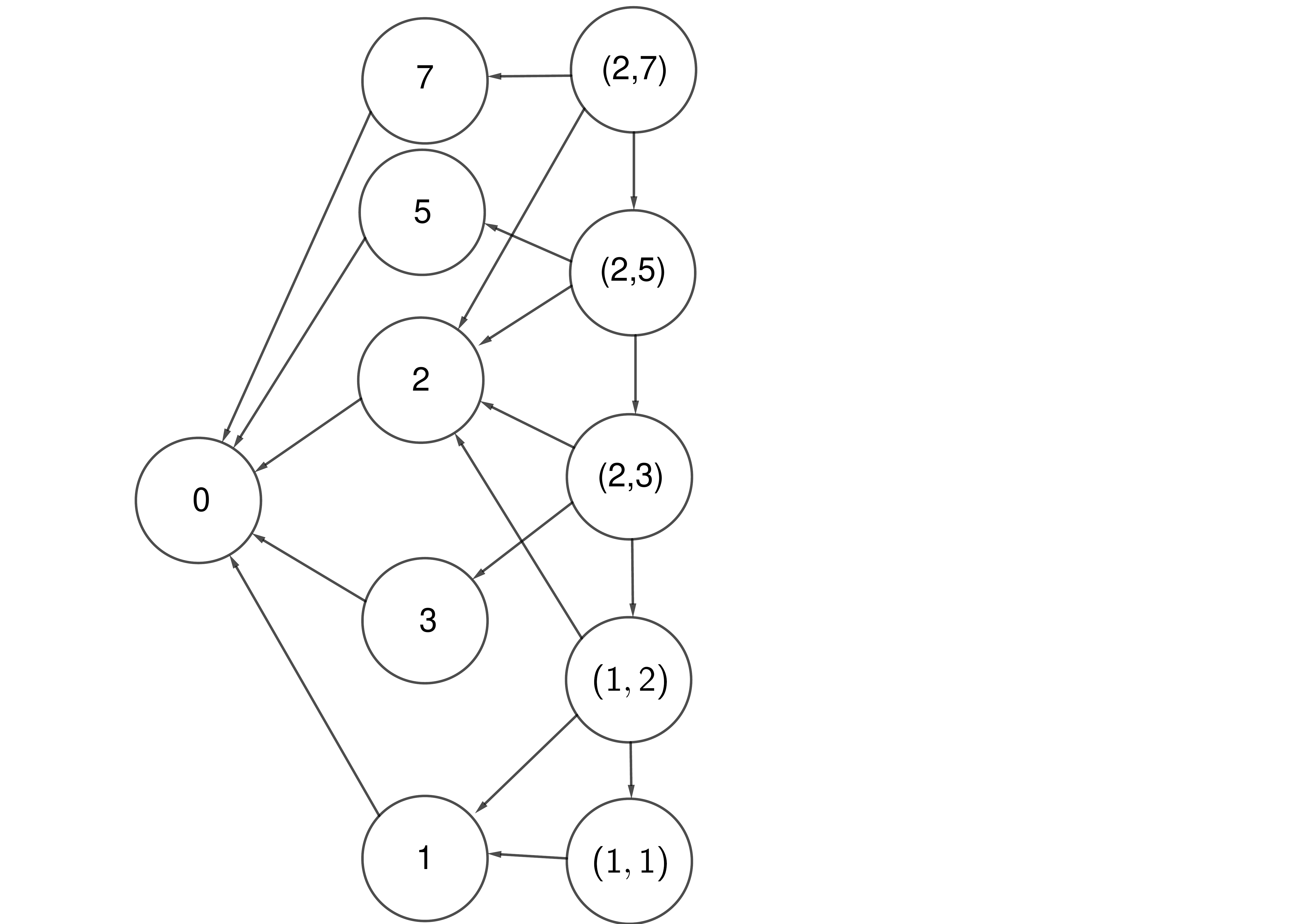}
\caption{The graph $G(\textsc{hats-nim},(2,7))$}\label{fig:hats}
\end{figure}

\begin{Theorem}\label{thm:hats}
Let $H = \{a,b\}$ be a two-heap multiset, with $0<a\leq b$, and let $G=G(\textsc{hats},H)$ be the associated game graph. The metric dimension of $G$ is 
$$ \beta(G)=
\begin{cases}
	1 & \text{ when } a=b \text{ or } H=\{1,2\}\\
	2 & \text{ when } 2<b=(a+1) \text{ or } 2<a=1\\
	3 & \text{ otherwise}
\end{cases}
$$
\end{Theorem}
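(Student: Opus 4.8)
The plan is to analyze the structure of the game graph $G(\textsc{hats},\{a,b\})$ explicitly as a set of positions $(x,y)$ with $0 \le x \le a$, $0 \le y \le b$, $x \equiv a \pmod{?}$... more precisely, the reachable positions from $(a,b)$ are exactly the pairs $(x,y)$ with $x \in \{0,a\}$ when... wait, no: subtracting $a$ from the first heap or $s=a$ or $s=b$ from either heap, but a move subtracts $s$ from a heap only when $s$ is itself a current heap size. So from $(a,b)$ the legal subtraction amounts are $a$ and $b$; I would first carefully work out the reachable position set and the edge relation, distinguishing the generic case from the degenerate ones $a=b$ and $\{1,2\}$. Once the digraph is pinned down, distances and signed distances between positions are short (the graph has small diameter), so the whole problem reduces to a finite case analysis of which small vertex sets resolve it.

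First I would dispose of the easy upper-bound cases: when $a=b$ the graph collapses essentially to a path-like structure and one well-chosen vertex resolves it; when $H=\{1,2\}$ a direct check on the handful of vertices shows a single vertex suffices; in the two ``dimension $2$'' families ($2<b=a+1$ and $2<a=1$, i.e. $H=\{1,b\}$) I would exhibit an explicit $2$-element resolving set and verify it by computing distance vectors, and in the remaining generic case exhibit an explicit $3$-element resolving set. These are all bounded computations once the edge structure is fixed. I would present the resolving sets cleanly (e.g. two corner/near-corner vertices, plus in the generic case one interior vertex) and tabulate the resulting distance vectors, noting that signed distances take values only in a small set like $\{-2,-1,0,1,2,\infty\}$ so the verification is a finite check.

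The substantive direction is the lower bounds: showing $\beta(G)\ge 2$ (resp. $\ge 3$) in the relevant cases by proving that no single vertex (resp. no pair) resolves $G$. For this I would identify, for each candidate small set $S$, an explicit pair of vertices that is ambiguous with respect to $S$ --- in the style of the $\textsc{nim}$ proofs above, where twins or near-twins at signed distance $\pm1$ or $\pm2$ from everything cannot be told apart. Concretely I expect the obstruction to come from pairs of positions that differ only in a coordinate ``near the top'' of a heap, together with the fact that a single vertex sees almost all of the graph at signed distance $\pm 1$ or $\pm 2$; I would argue by cases on where the vertices of $S$ can lie (which rows/columns of the grid) and show that in each placement some such ambiguous pair survives. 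To get $\ge 3$ in the generic case I would similarly show that for every pair $S=\{u,v\}$ there remains an ambiguous pair, organizing the argument by the ``type'' of $u$ and $v$ (diagonal vs. off-diagonal, which heap is at full size, etc.), analogously to the counting argument at the end of the proof of Theorem~\ref{thm:nim2}.

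The main obstacle I anticipate is the lower-bound case analysis in the generic case ($H\neq\{a,b\}$ diagonal, $H\neq\{1,2\}$, $b\neq a+1$, $a\neq 1$): ruling out all $\binom{|S|}{1}$-many single vertices is routine, but ruling out every $2$-subset requires a clean classification of vertex types and a uniform ambiguous-pair witness for each combination, and it is here that the precise hypotheses $b\neq a+1$ and $a\neq 1$ must be used (they are exactly the configurations where an extra coincidence lets two vertices do the work of three). I would try to isolate one or two ``hard'' ambiguous pairs --- likely the positions adjacent to $(a,b)$ along each heap, or a pair like $(0,b)$ versus $(a,b)$-type configurations --- and show every $2$-set fails to separate at least one of them unless $H$ falls into a degenerate family. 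Getting the degenerate families to come out to exactly $\{1,2\}$, $\{a,a+1\}$, and $\{1,b\}$ and no others is the delicate bookkeeping step.
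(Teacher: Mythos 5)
Your overall strategy --- explicit resolving sets for the upper bounds and a placement case analysis for the lower bounds --- is the same as the paper's, but as written the proposal defers essentially all of the content. You never pin down the edge relation (and in fact flag your own uncertainty about which subtraction amounts are legal mid-paragraph; the intended reading, confirmed by the paper's argument, is that the legal subtrahends are the \emph{current} heap sizes, so every single-heap position $\{x\}$ has exactly one move, to $0$). You give no explicit resolving sets: the paper uses $\{\{1\},\{1,a+1\}\}$ (resp.\ $\{\{1\},\{1,b\}\}$) in the two dimension-$2$ families, exploiting the fact that when $b=a+1$ or $a=1$ there is a single directed path of two-heap positions out of $\{1,a+1\}$; this structural fact is exactly what the hypotheses $b=a+1$ and $a=1$ buy you, and your sketch does not identify it.

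More importantly, your guess about where the ambiguity obstruction lives (``positions that differ only in a coordinate near the top of a heap'') points in the wrong direction. The actual obstruction in the generic case is the collection of at least four distinct single-heap positions $\{a\},\{b\},\{b-a\},\{|b-2a|\}$: single-heap positions are pairwise at signed distance $\infty$ (each moves only to $0$), each is at distance $1$ from a two-heap position containing it and $\infty$ from the others, and additionally $\{1\}$ and $\{1,1\}$ form a stubborn ambiguous pair. The paper's $\beta(G)\ge 3$ argument is a short case analysis on the types of the two vertices of $S$ (two single-heap; a single-heap plus a two-heap containing it; a single-heap plus a disjoint two-heap; two two-heap positions), and in each case one of these witnesses survives unresolved. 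Without identifying these witnesses, the lower-bound argument --- which you yourself name as the substantive part --- cannot be completed. One further point, which cuts both ways: neither your proposal nor the paper actually exhibits a $3$-element resolving set in the generic case, so the matching upper bound $\beta(G)\le 3$ there still needs to be supplied explicitly.
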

\begin{proof}
We need only consider games in which the greatest common divisor of $a$ and $b$ is $1$, since otherwise the game is equivalent playing on $\{\frac{a}{\gcd(a,b)},\frac{b}{\gcd(a,b)}\}$. 

First we consider when $a=b$ or $H=\{1,2\}$. In the former case the graph $G$ is simply a directed graph on three vertices, $\{a,a\} \rightarrow \{a\} \rightarrow 0$, which is resolved by any single vertex. In the latter case the graph is easily seen through inspection to be resolved by $\{1,1\}$, as the four other vertices have distance $1,2,-1$, and $\infty$ from $\{1,1\}$.

Now assume that $b=(a+1)$ and $a>1$. Any two-heap position has two single-heap vertices at distance $1$, and any single-heap position has two other single-heap positions at distance $\infty$. So if $|S|=1$ then $S$ does not resolve $G$. Instead let $S=\{\{1\},\{1,a+1\}\}$. If $\{x,y\}, x\leq y$ is a position with $x=0$ then the distance from $\{a,a+1\}$ is equal to $y$. If $x>0$ then $y=(x+1)$. As there is a single path from $\{1,a+b\}$ containing only two-heap positions, the distance from $\{1,a+1\}$ identifies $\{x,y\}$. Hence $S$ resolves $G$.

If $a=1,b>2$ then the same argument as in the previous case demonstrates that any resolving set must contains at least two vertices. Let $S=\{\{1\},\{1,b\}\}$, just as above. In the same way, we see that $S$ resolves $G$. 

Finally we assume that $b>a+1$ and $a$ is at least $2$. Assume that $|S|=2$. There are at least four single-heap vertices in $G$: $\{a\},\{b\},\{b-a\}$, and $\{|b-2a|\}$. If $S=\{\{x\},\{y\}\}$ then at least two single-heap games remain in $S^c$ and are ambiguous. If $S=\{\{x\},\{x,y\}\}$ or $\{\{y\},\{x,y\}\}$ with $x>1$ then $\{1\}$ and $\{1,1\}$ remain ambiguous to $S$, and if $x=1$ then there are at least two single-heap positions greater than $y$. These single-heap positions are not disambiguated by $S$. 

If $S = \{\{x\},\{y,z\}\}$ with $x\notin \{y,z\}$ then $\{y\}$ and $\{z\}$ are both $\infty$ from $\{x\}$ and $1$ from $\{y,z\}$. Finally, if $S=\{\{x,y\},\{z,w\}\}$ then $\{1\}$ and $\{1,1\}$ are not disambiguated by $S$ unless $\{1,1\} \in S$, in which case $\{z\}$ and $\{w\}$ are ambiguous. Note that $\{1,1\}$ is the only diagonal position in the game since $gcd(a,b)=1$.
\end{proof}	
	

%
%
%
%

\section{Metric dimension of some loopy games}\label{sec:loopy}

Recall from Section~\ref{sec:intro} that a game is \emph{loopy} if a single position can arise more than once during play. This is equivalent to its associated game graph containing cycles or loops. There are many ways that rulesets can admit loops, from allowing players to move a piece back to a previous position to permitting players to pass their turn. Some loopy games allow only one player to pass or return to a previous position. These games are referred to as \emph{OSLo} games (\textbf{O}ne-\textbf{S}ided \textbf{Lo}opy) \cite{siegel2011structure}.

Some of the simplest loopy games to visualize are variants of chess. 




Let $B(a,b)$ be the \emph{Bishop Graph} on an $a\times b$ grid. That is, represent each square in the grid by a vertex and adjoin a pair of vertices representing a legal bishop move in chess (any diagonal direction and any distance). 

\begin{figure}
\centering

\includegraphics[width=.2\textwidth]{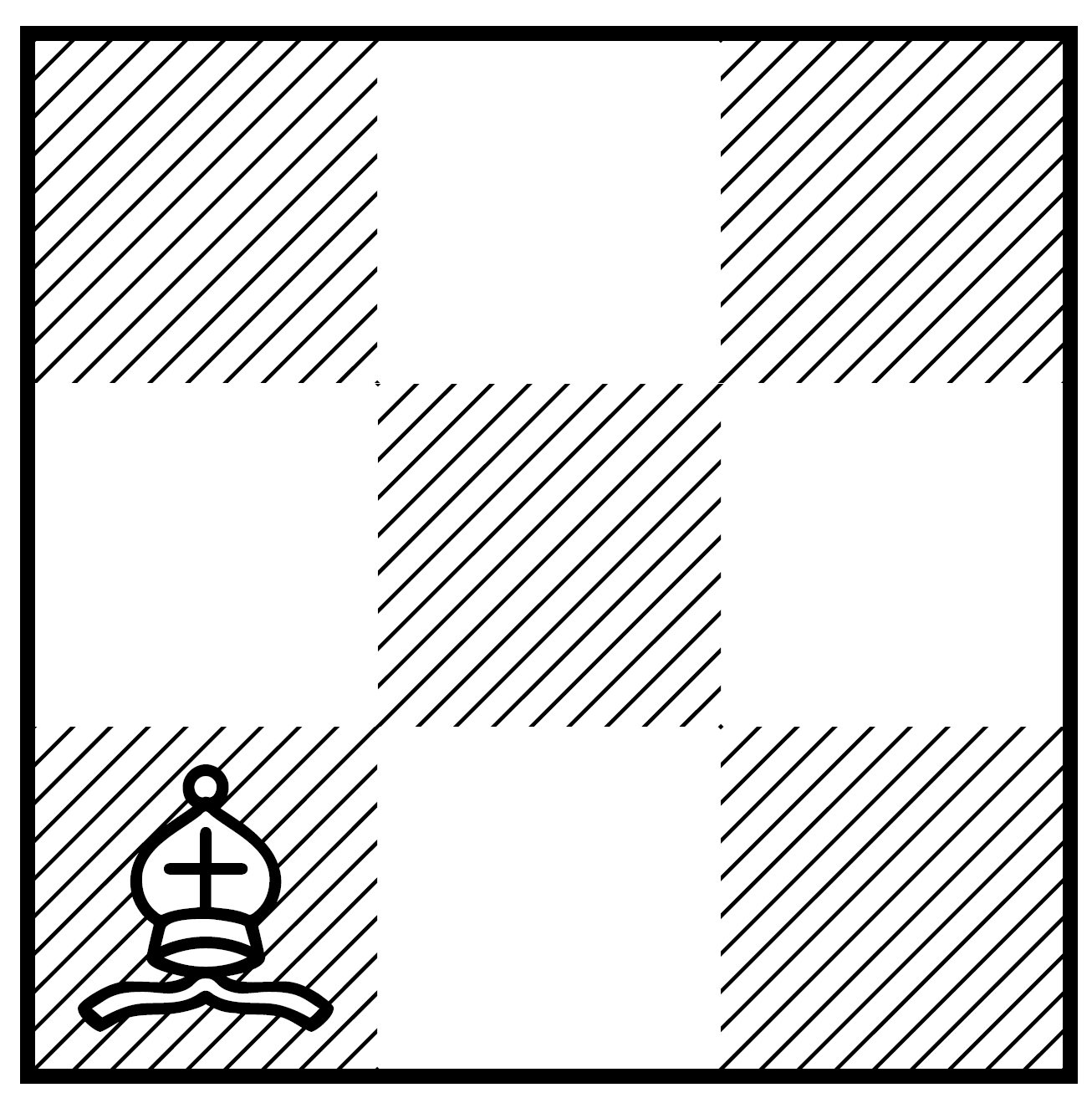}
\includegraphics[width=.2\textwidth]{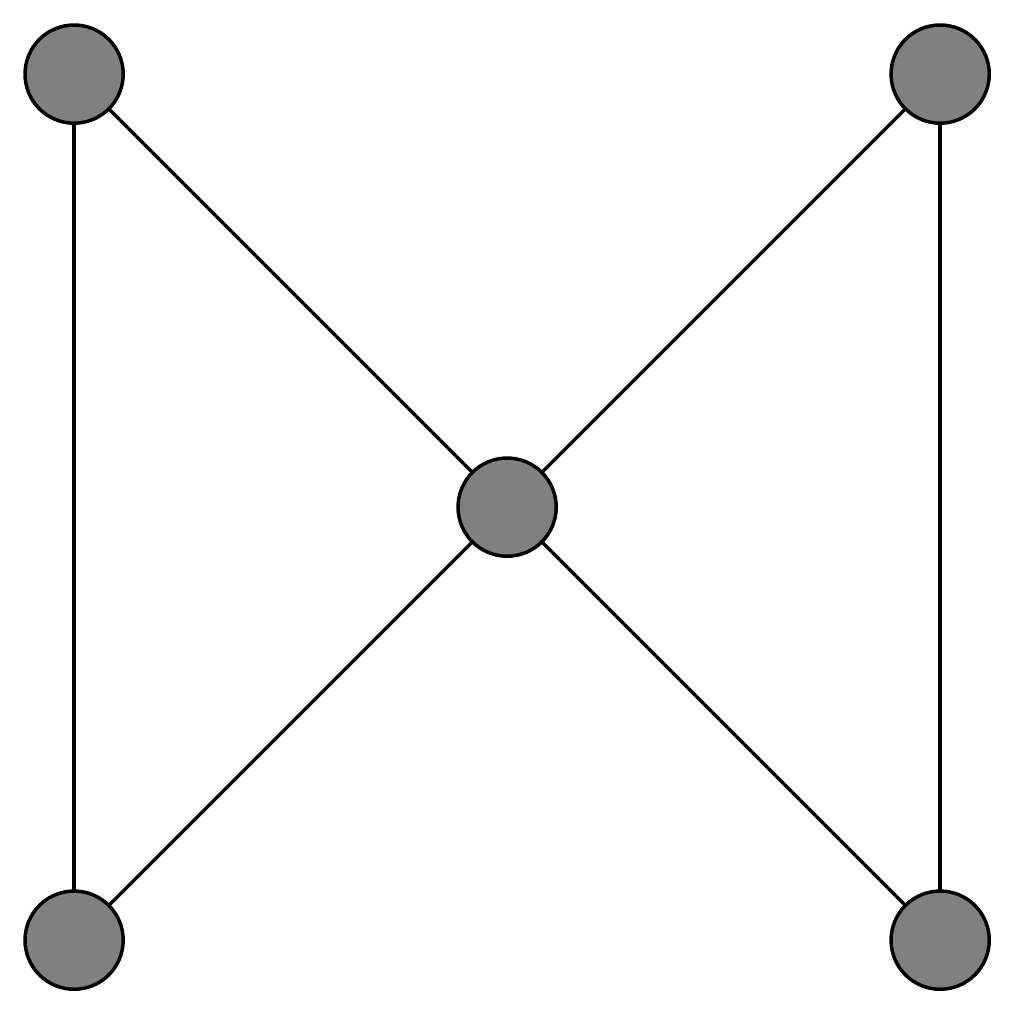}
\includegraphics[width=.2\textwidth]{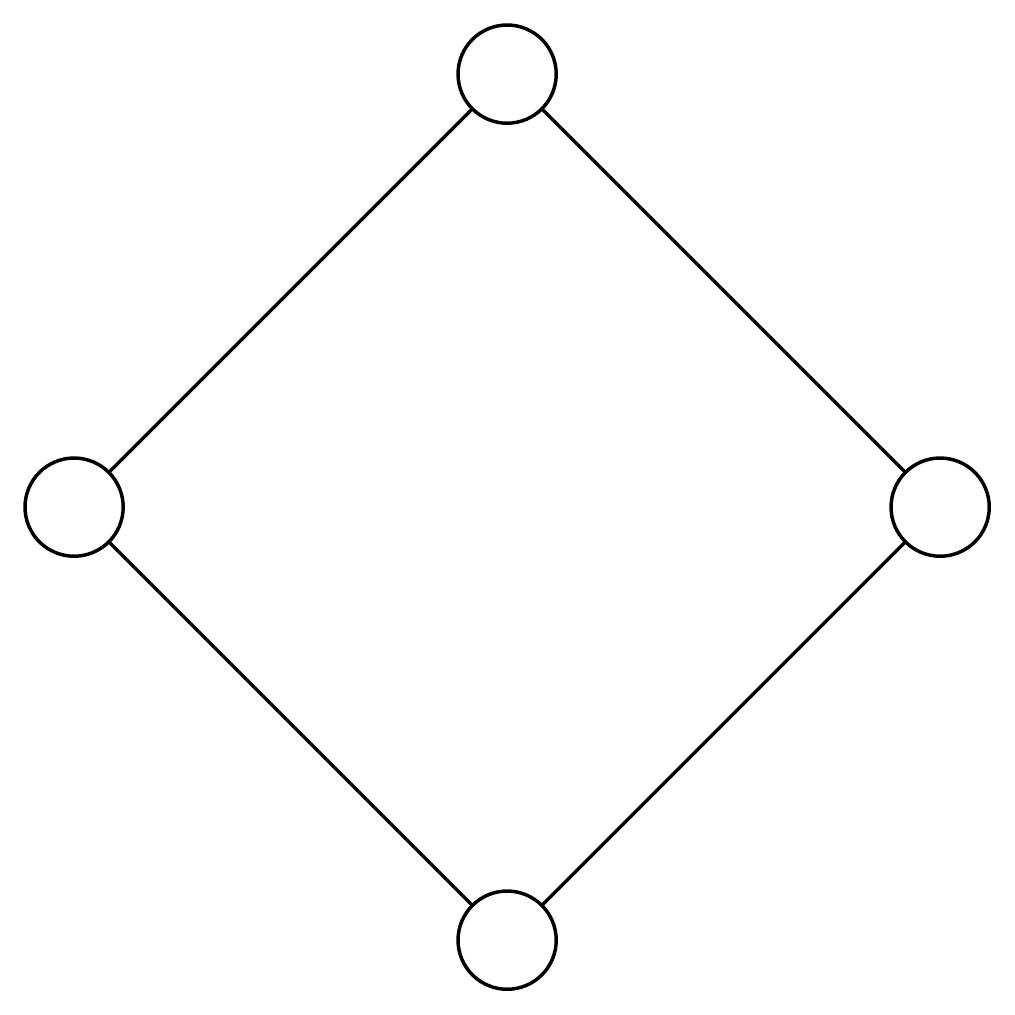}
\caption{The disconnected bishop graph on a $3\times 3$ grid}\label{fig:bishopgraph}
\end{figure}

Since a bishop on any one colored square can only move to other squares of that color the graph $B(a,b)$ is disconnected, with two distinct connected components (Figure~\ref{fig:bishopgraph}). We refer to these components as $B_L(a,b)$ and $B_D(a,b)$ for the light and dark squares, respectively. If we place a single bishop on one square of each color then neither will threaten the other. We refer to this position as $B_Lb_D$. The game graph $G(B(a,b),B_Lb_D)$ is therefore the cartesian product $B_L(a,b) \square B_D(a,b)$, with each edge replaced by a pair of symmetric arcs. Thus $G(B(a,b),B_Lb_D)$ has order equal to $\lceil \frac{ab}{2}\rceil\lfloor \frac{ab}{2} \rfloor$. For simplicity we will ignore these symmetric arcs and instead simply consider the simple underlying graph. 

Before examining $G(B(a,b))$ we note a Lemma from \cite{caceres2007metric}.

\begin{Lemma}\label{lem:cartprod}
If $\beta(G\square H)=2$ then one of the graphs $G,H$ is a path graph.
\end{Lemma}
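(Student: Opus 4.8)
The plan is to reason directly about a hypothetical two-element resolving set, exploiting the fact that distance in a Cartesian product adds coordinatewise: for all vertices, $d_{G\square H}((g,h),(g',h'))=d_G(g,g')+d_H(h,h')$. (Here $G$ and $H$ are connected, as in \cite{caceres2007metric}; if either factor is a single vertex the claim is immediate, so assume each has an edge.) So suppose $\beta(G\square H)=2$ and fix a resolving set $W=\{(a_1,b_1),(a_2,b_2)\}$. Since each factor has an edge, $G\square H$ contains a $4$-cycle and so is not a path; hence no single vertex resolves it, and the two listed vertices of $W$ are genuinely distinct.

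The core of the argument is the claim that $a_1=a_2$ or $b_1=b_2$, which I would prove by contradiction. Suppose $a_1\neq a_2$ and $b_1\neq b_2$. Let $g^{*}$ be the neighbour of $a_1$ on a shortest $a_1a_2$-path and $h^{*}$ the neighbour of $b_1$ on a shortest $b_1b_2$-path, so $d_G(a_1,g^{*})=1$, $d_G(a_2,g^{*})=d_G(a_1,a_2)-1$, and symmetrically for $h^{*}$. Then the vertices $u=(g^{*},b_1)$ and $v=(a_1,h^{*})$ are distinct (their $G$-coordinates differ), yet expanding $d(W|u)$ and $d(W|v)$ with the additive formula shows that both equal $\big(1,\ d_G(a_1,a_2)+d_H(b_1,b_2)-1\big)$. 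This contradicts $W$ resolving $G\square H$, establishing the claim.

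By the symmetry of $\square$ in $G$ and $H$, assume $a_1=a_2=:a$; then $b_1\neq b_2$ (otherwise $W$ is a single vertex). Restricting attention to the slice $V(G)\times\{b_1\}$, the distance vector of $(g,b_1)$ is $\big(d_G(a,g),\,d_G(a,g)+d_H(b_1,b_2)\big)$, so $W$ can separate these vertices only if $g\mapsto d_G(a,g)$ is injective on $V(G)$. I would then finish with the elementary fact that a connected graph whose vertices have pairwise distinct distances from a fixed vertex $a$ must be a path: the BFS layers from $a$ are then singletons occupying the consecutive distances $0,1,\dots,n(G)-1$, which forces $G\cong P_{n(G)}$ with $a$ an endpoint. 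Hence $G$ is a path, as required.

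I expect the delicate point to be the claim in the second paragraph: extracting, from the bare inequalities $a_1\neq a_2$ and $b_1\neq b_2$, an explicit pair of vertices that $W$ fails to distinguish. Once the pair $\{(g^{*},b_1),(a_1,h^{*})\}$ is identified, the distance computation is routine, and the remaining steps — the slice argument and the characterisation of graphs with an injective distance function — are straightforward, provided one is careful with the degenerate configurations (a trivial factor, or $b_1=b_2$ collapsing $W$ to one vertex). One could alternatively route the proof through the ``doubly resolving sets'' machinery of \cite{caceres2007metric}, but the direct argument above seems shorter.
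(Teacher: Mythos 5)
Your argument is correct. Note, though, that the paper does not prove this lemma at all: it is imported verbatim from \cite{caceres2007metric}, so what you have produced is a self-contained proof of a black-box ingredient rather than a variant of an argument in the text. Your route is essentially the natural direct one: the additivity of distances in $G\square H$ gives the exchange pair $u=(g^{*},b_1)$, $v=(a_1,h^{*})$ with identical distance vectors $\bigl(1,\ d_G(a_1,a_2)+d_H(b_1,b_2)-1\bigr)$, forcing the two landmarks to agree in one coordinate; the slice argument then reduces the problem to the injectivity of $g\mapsto d_G(a,g)$, and a connected graph with injective distance function from $a$ is a path with $a$ as an endpoint (singleton BFS layers, and no edge can join non-consecutive layers without shortening a distance). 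Each computation checks out, and you correctly flag the degenerate cases ($K_1$ factors, collapsed landmark set). Two small points of polish: the observation about the $4$-cycle is redundant, since $\beta(G\square H)=2$ already guarantees two distinct landmarks; and in the final step it is worth saying explicitly that an edge $v_iv_j$ with $j\geq i+2$ would give $d(a,v_j)\leq i+1<j$, which is the one line needed to turn ``singleton layers'' into ``path.'' The alternative you mention --- routing through the doubly-resolving-set machinery of \cite{caceres2007metric} --- is what that reference actually develops for its stronger product formulas; for this one lemma your direct argument is shorter and makes the paper less dependent on the external source.
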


Now we determine the metric dimension of some bishop graphs.

\begin{Theorem}\label{thm:bishop}
For the game graph $G(B(a,b),B_Lb_D))$,
\begin{enumerate}
	\item If $n>1$ then $\beta(G(B(2,n),Bb)) = 2$. 
	\item $\beta(G(B(3,3),B_Lb_D)) = \beta(G(B(3,4),B_Lb_D)) = 3$
\end{enumerate}
\end{Theorem}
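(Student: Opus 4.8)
The plan is to handle the two parts separately, since they rely on different structural facts about the bishop graphs in question.

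For part (1), the key observation is that the light and dark components of $B(2,n)$ are very simple: on a $2\times n$ board, the diagonals through same-colored squares form paths. Concretely, I would first show that each of $B_L(2,n)$ and $B_D(2,n)$ is a path graph (a bishop on a $2\times n$ board, restricted to one color class, can move only "up-left" or "down-right" along a single diagonal strip, so the same-colored squares are linearly ordered by a single coordinate). Since the game graph $G(B(2,n),B_Lb_D)$ is (the underlying simple graph of) $B_L(2,n)\,\square\,B_D(2,n)$, which is a product of two paths, i.e.\ a grid graph, I can invoke the known fact that grid graphs $P_m\,\square\,P_k$ have metric dimension $2$ (for $m,k\ge 2$) — or prove it directly by exhibiting two opposite corner vertices as a resolving set and checking that one vertex never resolves because of the automorphism swapping the two "arms". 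The lower bound $\beta\ge 2$ follows because a single vertex cannot resolve any graph on more than two vertices that has a nontrivial distance-preserving symmetry, and here $n>1$ guarantees enough vertices. So part (1) is essentially: identify the components as paths, cite/recall the grid-graph result, done.

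For part (2), I would use Lemma~\ref{lem:cartprod} for the lower bound. The components $B_L(3,3), B_D(3,3)$ and $B_L(3,4), B_D(3,4)$ are \emph{not} path graphs — on a $3\times 3$ board one color class has $5$ squares with a genuine branching diagonal structure (the center square is a cut vertex adjacent to several others), and similarly for $3\times 4$. Hence neither factor is a path, so by Lemma~\ref{lem:cartprod} we get $\beta(G(B(3,3),B_Lb_D))\ge 3$ and likewise for $3\times 4$; combined with the trivial bound that these products are not paths (so $\beta\ge 2$) we only need to rule out $\beta=2$, which is exactly what the Lemma gives. For the upper bound I would exhibit an explicit resolving set of size $3$ in each case: pick three vertices (ordered pairs of a light-square and a dark-square position) and verify that the resulting distance vectors are pairwise distinct over all $\lceil ab/2\rceil\lfloor ab/2\rfloor$ vertices. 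This is a finite check — $G(B(3,3))$ has $5\cdot 4=20$ vertices and $G(B(3,4))$ has $6\cdot 6=36$ vertices — so I would either present a small table or argue via the product structure $d_{G\square H}((g,h),(g',h'))=d_G(g,g')+d_H(h,h')$, choosing the three chosen vertices so their $B_L$-coordinates and $B_D$-coordinates already resolve each factor "up to the symmetry that the third vertex breaks."

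The main obstacle is the upper bound in part (2): I need to actually pin down the component graphs $B_L(3,3)$, $B_D(3,3)$, $B_L(3,4)$, $B_D(3,4)$ explicitly (their vertex sets and adjacencies), compute all pairwise distances within each, and then find three vertices of the product whose distance vectors separate everything. Getting the component structure right is the subtle part — one must be careful that bishop moves of \emph{any} distance are allowed, so two squares on the same diagonal line are adjacent even if not consecutive, which makes these components denser (smaller diameter) than one might first guess. Once the components and their distance matrices are in hand, the verification that a well-chosen triple resolves the product is routine but tedious; I would organize it using the additive distance formula for Cartesian products to keep it manageable, and I would choose the three resolving vertices to share a coordinate in one factor so that the bookkeeping collapses.
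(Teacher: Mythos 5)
Your proposal follows essentially the same route as the paper: part (1) identifies both color components of $B(2,n)$ as path graphs and invokes the grid-graph result, and part (2) uses Lemma~\ref{lem:cartprod} for the lower bound together with an explicit three-vertex resolving set, checked by a finite case analysis over the Cartesian product, for the upper bound. Your caution about long-range bishop moves is well placed: the five-vertex component of $B(3,3)$ has each opposite corner pair adjacent along its shared diagonal, so it is two triangles sharing the center vertex rather than the star $K_{1,4}$ the paper names, although this does not change the stated metric dimensions.
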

\begin{proof}
We note that both $B_L(2,n)$ and $B_D(2,n)$ are path graphs for all $n>1$. Therefore their cartesian product is a grid graph which has metric dimension $2$.

Note that neither $B_L(a,b)$ nor $B_D(a,b)$ is a path graph for $a,b>2$. Hence by Lemma~\ref{lem:cartprod} $\beta(G(B(a,b),B_Lb_D))>2$. First we consider $G=G(B(3,3),B_Lb_D)$. One of $B_D(3,3), B_L(3,3)$ is isomorphic to $C_4$ and the other to the star on $5$ vertices, so we can consider $G\cong K_{1,4}\square C_4$. 

Consider the set $S = \{((0,0),(0,1)),((0,0),(1,0)),((0,2),(0,1))\}$. A case analysis (omitted but repeatable by inspection) shows that $S$ resolves $G$.

Similarly, if $G' = G(B(3,4),B_Lb_D)$ then $B_L(3,4)\equiv B_D(3,4)$, and the set $S = \{((0, 1), (0, 0)), ((1, 0), (1, 3)), ((2, 3), (1, 3))\}$ resolves the graph $G'$.
\end{proof}

We can also bound the value of $\beta(G(B(3,n),B_Lb_D))$ using doubly resolving sets.

\begin{Definition}\label{def:dres}
A set $S$ of vertices in a graph $G$ \emph{doubly resolves} the vertices $u,v$ if there is a pair $s_1,s_2\in S$ such that $d(s_1,u)-d(s_2,u) \neq d(s_1,v)-d(s_2,v)$. The set $S$ is a \emph{doubly-resolving set} for the graph $G$ if every pair of vertices in $G$ is doubly-resolved by a pair of vertices in $S$.
\end{Definition}

We denote the cardinality of a smallest doubly-resolving set in $G$ by $\psi (G)$. Lemma~\ref{lem:dres} was also proven in  \cite{caceres2007metric}.

\begin{Lemma}\label{lem:dres}
$\beta(G\square H) \leq \beta(G) + \psi(H) - 1$
\end{Lemma}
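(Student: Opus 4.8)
The plan is to build an explicit resolving set for $G\square H$ of size exactly $\beta(G)+\psi(H)-1$ out of a metric basis of $G$ and a minimum doubly-resolving set of $H$, relying throughout on the standard identity $d((g,h),(g',h')) = d_G(g,g') + d_H(h,h')$ for distances in a Cartesian product.

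Concretely, fix a metric basis $W=\{g_1,\dots,g_k\}$ of $G$ (so $k=\beta(G)$) and a minimum doubly-resolving set $D=\{h_1,\dots,h_m\}$ of $H$ (so $m=\psi(H)$), and put
$$S = \bigl(W\times\{h_1\}\bigr)\cup\bigl(\{g_1\}\times D\bigr).$$
The vertex $(g_1,h_1)$ lies in both pieces, so $|S| = k+m-1 = \beta(G)+\psi(H)-1$, and it remains only to check that $S$ resolves $G\square H$. I would argue by contradiction: suppose $(a,x)\neq(b,y)$ have equal distance vectors to $S$.

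The easy cases are when one coordinate agrees. If $x=y$ (so $a\neq b$), the coordinates coming from $W\times\{h_1\}$ give $d_G(g_i,a)+d_H(h_1,x) = d_G(g_i,b)+d_H(h_1,x)$ for all $i$, hence $d_G(g_i,a)=d_G(g_i,b)$ for all $i$, contradicting that $W$ resolves $G$; the symmetric situation $a=b$, $x\neq y$ is handled the same way using that a doubly-resolving set is in particular resolving.

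The substantive case, and the step I expect to be the main obstacle, is $a\neq b$ and $x\neq y$ together. From the block $W\times\{h_1\}$ I get that $d_G(g_i,a)-d_G(g_i,b)$ equals the $i$-independent constant $c := d_H(h_1,y)-d_H(h_1,x)$; from the block $\{g_1\}\times D$ I get that $d_H(h_j,x)-d_H(h_j,y)$ equals the $j$-independent constant $d_G(g_1,b)-d_G(g_1,a)$, and since $g_1\in W$ the first family (with $i=1$) forces this second constant to be $-c$. Thus $d_H(h_j,x)-d_H(h_j,y)=-c$ for every $h_j\in D$, so no pair $h_j,h_{j'}\in D$ satisfies $d(h_j,x)-d(h_{j'},x)\neq d(h_j,y)-d(h_{j'},y)$; since $x\neq y$ this contradicts $D$ being a doubly-resolving set. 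Hence $S$ resolves $G\square H$, giving the bound. The one point I would be careful about in the write-up is precisely the compatibility of the two derivations of the constant $c$ — it hinges on $g_1$ being an element of the metric basis $W$, which is exactly why the two index sets used to build $S$ overlap in the single vertex $(g_1,h_1)$, and also what makes the size count come out to $\beta(G)+\psi(H)-1$ rather than $\beta(G)+\psi(H)$.
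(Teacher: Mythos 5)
Your proof is correct. The paper does not actually prove this lemma---it simply cites C\'aceres et al.\ for it---and your construction, taking $S=(W\times\{h_1\})\cup(\{g_1\}\times D)$ with the single overlap vertex $(g_1,h_1)$ and using $d((g,h),(g',h'))=d_G(g,g')+d_H(h,h')$ to reduce the mixed case to the doubly-resolving property of $D$, is exactly the standard argument from that reference, with all cases (including the subtle $a\neq b$, $x\neq y$ one) handled soundly.
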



Since the game graph with non-attacking bishops is equivalent to a simple graph, this leads us to Theorem~\ref{thm:b3xn} which bounds the metric dimension of $3\times n$ two-bishop game graphs.

\begin{Theorem}\label{thm:b3xn}
$3\leq \beta(G(B(3,n),B_Lb_D)) \leq 4$
\end{Theorem}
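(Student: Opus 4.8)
The plan is to read off $\beta(G(B(3,n),B_Lb_D))$ from the Cartesian-product identification $G(B(3,n),B_Lb_D)\cong B_L(3,n)\,\square\,B_D(3,n)$ (taken, as above, on the underlying simple graph) together with the two lemmas of C\'aceres et al.\ quoted above, Lemma~\ref{lem:cartprod} and Lemma~\ref{lem:dres}. The lower bound will come from Lemma~\ref{lem:cartprod} and the upper bound from Lemma~\ref{lem:dres}, so the real content is a structural understanding of the two colour-class components $B_L(3,n)$ and $B_D(3,n)$.

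For the lower bound I would argue as in the proof of Theorem~\ref{thm:bishop}: since $3>2$ and $n\ge 3>2$, neither $B_L(3,n)$ nor $B_D(3,n)$ is a path. A connected graph has metric dimension $1$ exactly when it is a path, and a Cartesian product of two connected graphs on at least two vertices each contains a $4$-cycle and so is not a path; hence $\beta(G)\ge 2$, and if $\beta(G)=2$ then Lemma~\ref{lem:cartprod} would force one of the two factors to be a path, which it is not. Therefore $\beta(G)\ge 3$.

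For the upper bound I would apply $\beta(B_L(3,n)\,\square\,B_D(3,n)) \le \beta(B_L(3,n)) + \psi(B_D(3,n)) - 1$ from Lemma~\ref{lem:dres}, reducing the theorem to two claims: one colour-class component has metric dimension $2$, and the other has a doubly resolving set of size $3$, which gives $\beta(G)\le 2+3-1=4$. To prove these I would first pin down the structure of the components. In square coordinates $(i,j)$ with $0\le i\le 2$, every diagonal of the board is monochromatic, and two squares of one colour are adjacent in the bishop graph iff they lie on a common diagonal; since the board has only three rows, these diagonals are the maximal cliques of the component and have size at most $3$, so each component is a chain of triangles running the length of the board, glued to one another at shared vertices, with a few short diagonals (of length $1$ and $2$) at the two ends. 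For the metric-dimension-$2$ claim I would take two landmarks at opposite ends of this strip — say a corner square and a square in the last column — show that the distance from such a landmark to a square $(i,j)$ is a controlled, essentially monotone function of $j$ (growing like $\lceil j/2\rceil$), and check that the ordered pair of these two distances determines $(i,j)$. For the doubly-resolving claim I would keep those two landmarks, add a third near one end, and verify the differenced-distance inequality of Definition~\ref{def:dres} for every pair of vertices, which is routine once the distance formulas are in hand.

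The step I expect to be the main obstacle is this structural analysis carried out uniformly in $n$: writing down clean distance formulas on the strip and verifying injectivity of the two-coordinate distance vectors, and then the doubly-resolving condition. Two complications need care: the short diagonals at the two ends behave differently from the length-$3$ interior ones, and for odd $n$ the two colour-class components are not isomorphic (they have $\lceil 3n/2\rceil$ and $\lfloor 3n/2\rfloor$ vertices), so one must either verify both properties for both components or take care to pair ``metric dimension $2$ for one component'' with ``doubly resolving number $\le 3$ for the other''. The remaining ingredients — the two lemmas and the lower-bound argument — are immediate.
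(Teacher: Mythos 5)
Your proposal is correct and follows essentially the same route as the paper: the lower bound from Lemma~\ref{lem:cartprod} because neither colour-class component is a path, and the upper bound $2+3-1=4$ from Lemma~\ref{lem:dres} once one component is shown to have metric dimension $2$ and the other a doubly resolving set of size $3$. The only difference is presentational: the paper verifies the component-level claims by exhibiting the three possible $3\times n$ components and their landmark sets in a figure, whereas you outline an explicit distance-formula verification (and rightly flag the end-of-strip diagonals and the non-isomorphic components for odd $n$ as the points needing care).
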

\begin{proof}
First we note that the lower bound follows from Lemma~\ref{lem:cartprod}. For the upper bound consider $B_D(3,n)$ and $B_L(3,n)$. Depending on the parity of $n$, each is isomorphic to one of the graphs in Figure~\ref{fig:bsubs}. A metric basis of size $2$ for each is highlighted and is easy to verify, and we see doubly resolving sets of size $3$. Since $\beta(B_D(3,n))=\beta(B_L(3,n))=2$ and $\psi(B_D(3,n))=\psi(B_L(3,n))\leq 3$, Lemma~\ref{lem:dres} gives us an upper bound of $4$ on $\beta(B_D(3,n)\square B_L(3,n)) = \beta(G(B(3,n),B_Lb_D))$. 
\end{proof}

\begin{figure}
\centering

\includegraphics[width=.9\textwidth]{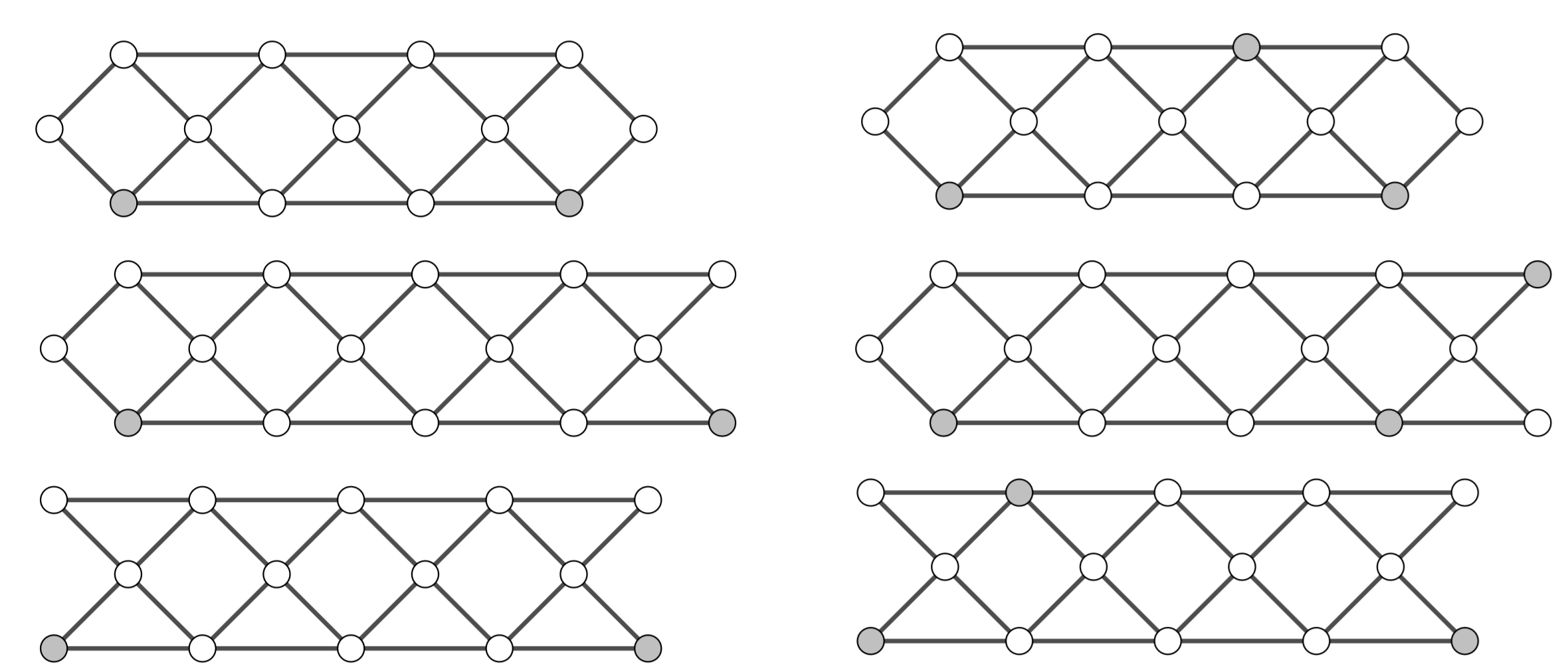}
\caption{All three possible connected components of a $3\times n$ bishop graph, with highlighted Metric Bases (left) and smallest Doubly Resolving Sets (right)}\label{fig:bsubs}
\end{figure}

Next we move on to another simplified chess variant. Recall that a king can move a distance of one square in any of the $8$ directions, and can attack in any of these squares. Define the \emph{King Graph} $K(a,b)$ in the obvious way and consider a pair of opposing kings on a $1\times n$ strip. Denote the position wherein the first king is on square $i$ and the second on square $j$ as $K_ik_j$, an example of a \emph{bare king} position. Since the game is loopy the game graph $G(K(1,n),K_ik_j)$ contains directed cycles, however it is not strongly connected since positions containing a single king have no directed path to positions containing both kings. Since we will consider the game graph to contain all possible positions on the strip we abbreviate the notation to $G(K(1,n))$. See Figure~\ref{fig:kstrip} for an example.

\begin{figure}
\centering
\includegraphics[width=.6\textwidth]{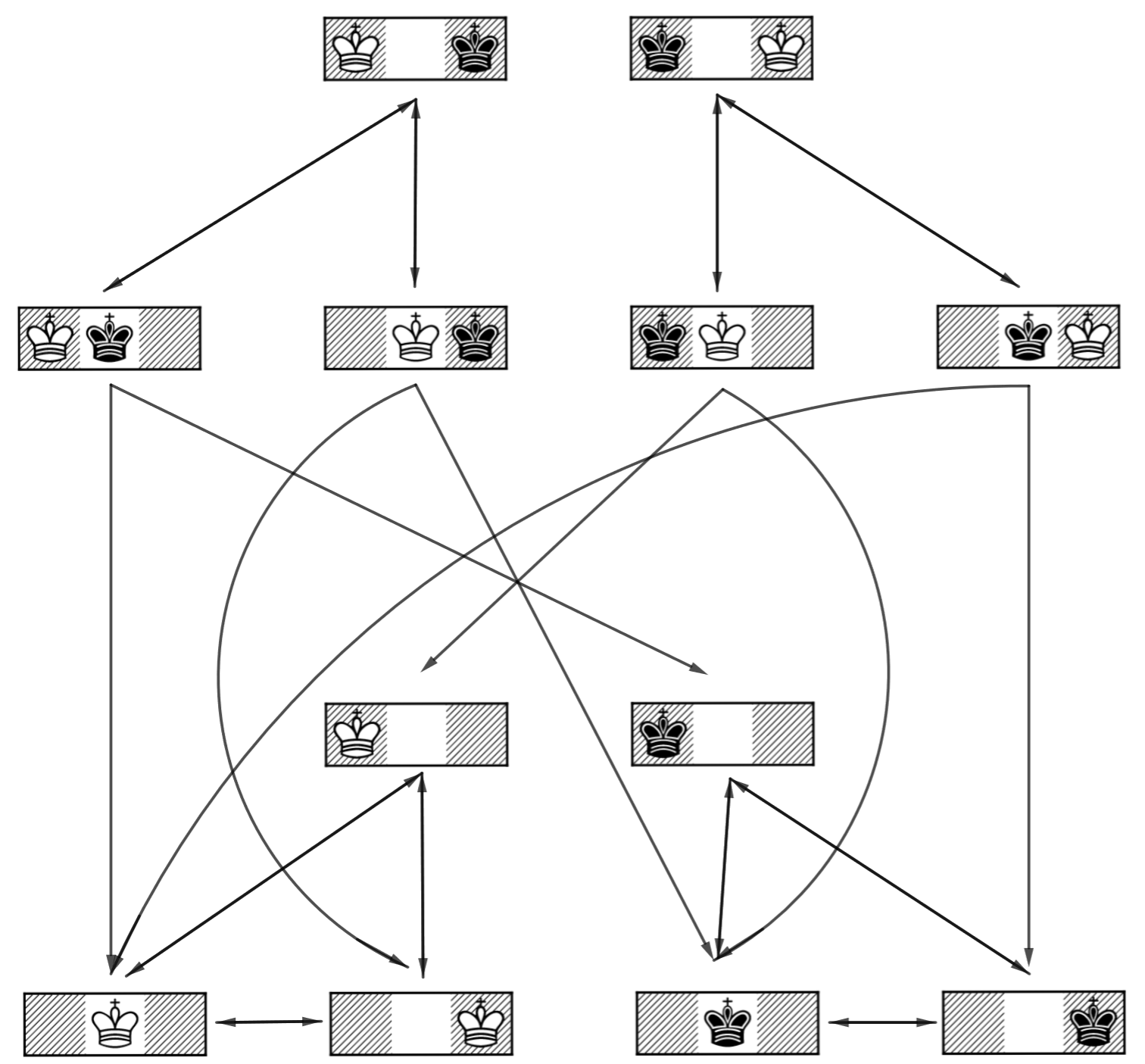}
\caption{The graph associated with the game with opposing kings on a $1\times 3$ strip}\label{fig:kstrip}
\end{figure}

\begin{Theorem}\label{thm:stripking}
The metric dimension of the game graph $G=G(K(1,n))$ is
$$ \beta(G) = 
\begin{cases}
	2 & \text{ if } 2\leq n\leq 3 \\
	3 & \text{ if } n>3
\end{cases}
$$
\end{Theorem}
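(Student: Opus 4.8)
The plan is to first read off the structure of $G=G(K(1,n))$, then dispose of $n\in\{2,3\}$ with explicit bases, and finally prove the lower and upper bounds for $n\ge 4$ separately. I would describe $G$ as follows. Its vertices are the \emph{two-king positions}, which I record as ordered pairs $(i,j)$ with $1\le i,j\le n$ and $i\ne j$ (first king on square $i$, second on square $j$), together with the \emph{lone-king positions} $\ell_1,\dots,\ell_n$ reached once a capture has occurred. A move changes one coordinate by $\pm1$ within $[1,n]$; stepping onto the other king's square is a capture and leads to the corresponding $\ell_k$, and once only one king remains play can never return to a two-king position. Hence $V(G)$ splits into $U=\{(i,j):i<j\}$, $D=\{(i,j):i>j\}$, and $L=\{\ell_1,\dots,\ell_n\}$, where $G[U]$ and $G[D]$ are isomorphic grid-type graphs, there is no directed path between $U$ and $D$ in either direction, and $L$ is absorbing. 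The signed-distance facts I would extract: $d(u,w)=\infty$ whenever $u\in U$ and $w\in D$; for $u\in U\cup D$ and $\ell\in L$, $d(u,\ell)$ is finite and $d(\ell,u)=-d(u,\ell)<0$; there is an automorphism of $G$ interchanging $(i,j)$ with $(j,i)$ and fixing each lone position, so $(i,j)$ and $(j,i)$ are equidistant from every vertex of $L$; and $G[U]$ (equivalently $G[D]$) is a path exactly when $n\le 3$, acquiring a cycle once $n\ge 4$.

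For $n\in\{2,3\}$, no single vertex resolves $G$ --- a vertex of $U$ assigns $\infty$ to all of $D$ and cannot separate two of its vertices, symmetrically for $D$, and a vertex of $L$ cannot separate the mirror pair $(1,2),(2,1)$ --- so $\beta(G)\ge 2$; for the matching upper bound I would exhibit an explicit two-element set and check that its distance vectors are pairwise distinct, which is short because the two landmarks resolve the two path-shaped regions $G[U]$, $G[D]$ while the pattern of $\infty$-entries and of signs already separates $U$, $D$, and $L$ from one another.

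The crux is the lower bound $\beta(G)\ge 3$ for $n\ge 4$. Suppose $S$ resolves $G$ with $|S|=2$. Since every vertex of $S$ lying in $U$ assigns the common value $\infty$ to every vertex of $D$, the vertices of $S$ contained in $D\cup L$ must by themselves resolve $D$; but for $n\ge 4$ no single vertex of $D\cup L$ resolves $D$: a vertex of $D$ would have to be a metric basis of the non-path graph $G[D]$, while a lone vertex $\ell$ assigns each $(i,j)\in D$ the value $-d((i,j),\ell)$, a function I would show to be non-injective on $D$ for every choice of $\ell$ once $n\ge 4$ (by exhibiting two $D$-positions equidistant from $\ell$). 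Hence $|S\cap(D\cup L)|\ge 2$, forcing $S\subseteq D\cup L$; the symmetric argument forces $S\subseteq U\cup L$; therefore $S\subseteq L$, and then the mirror pair $(1,2),(2,1)$ receives identical distance vectors from $S$ --- a contradiction. Thus $\beta(G)\ge 3$.

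For the upper bound $\beta(G)\le 3$ when $n\ge 4$, I would exhibit a three-element set $S=\{u_0,d_0,\ell_0\}$ with $u_0\in U$, $d_0\in D$, $\ell_0\in L$ (for instance $u_0=(1,2)$, $d_0=(2,1)$, $\ell_0=\ell_1$): the coordinate equal to $\infty$ separates $U$ from $D$, the sign of the $u_0$- and $d_0$-coordinates separates $U$ and $D$ from $L$, and $\ell_0$ together with $u_0,d_0$ resolves the lone positions, so it suffices to check that $\{u_0,\ell_0\}$ resolves $U$ and, symmetrically, $\{d_0,\ell_0\}$ resolves $D$; here the distance from $u_0$ inside $G[U]$ recovers one linear function of $i,j$ while $d((i,j),\ell_0)$ --- the least number of king moves forcing a capture whose lone survivor occupies $\ell_0$'s square --- recovers an independent one, so the two together pin down $(i,j)$. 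I expect the main obstacle to be exactly these last injectivity statements, together with their counterpart in the lower bound, since all three reduce to controlling, uniformly in $n$, the function that sends a two-king position to its distance from a prescribed lone position.
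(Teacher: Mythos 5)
Your overall architecture (split the vertices into $U$, $D$, and the absorbing lone-king class $L$; use the $\infty$-entries to force a two-element resolving set into $D\cup L$ and into $U\cup L$ simultaneously, hence into $L$) is sound, and in fact cleaner than the paper's own lower-bound argument, which instead forces one landmark into each of $U$ and $D$ and then finds an ambiguous pair of lone positions. Your upper-bound set $\{(1,2),(2,1),\ell_1\}$ is essentially the paper's basis $\{K_1,K_1k_2,K_2k_1\}$. But there is a genuine gap, and it comes from mis-modelling $L$. A capture leaves a specific surviving king, so there are $2n$ lone positions $K_1,\dots,K_n,k_1,\dots,k_n$ (the paper's table for $n=3$ lists $K_2$ and $k_2$ as distinct vertices with distance vectors $(1,\infty)$ and $(\infty,1)$), not $n$ positions $\ell_1,\dots,\ell_n$. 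Consequently the automorphism exchanging $K_ik_j$ with $K_jk_i$ does \emph{not} fix $L$ pointwise; it swaps $K_m\leftrightarrow k_m$. Your claim that no vertex of $L$ separates a mirror pair is therefore false: already in the paper's table, $d(K_1,K_1k_2)=-2$ while $d(K_1,K_2k_1)=-1$, and more generally $K_a$ separates $(i,j)$ from $(j,i)$ whenever $a\le\min(i,j)$ or $a\ge\max(i,j)$.

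This breaks the last step of your $n\ge 4$ lower bound. After you have forced $S\subseteq L$ with $|S|=2$, the contradiction cannot come from a mirror pair: for instance $S=\{K_1,k_n\}$ separates \emph{every} mirror pair (the distance from $K_1$ to a two-king position is $j$ on $U$ and $i-1$ on $D$, which differ on each mirror pair). One must instead locate a different ambiguous pair; e.g.\ for $n=4$ and $S=\{K_1,k_4\}$ the positions $K_4k_1$ and $K_2k_3$ both receive the vector $(-3,-3)$, and a uniform choice of such a pair is needed for each of the $\binom{2n}{2}$ two-subsets of $L$ (the cases $S=\{K_a,K_b\}$ and $S=\{k_a,k_b\}$ are easy, since all of the opposite king's lone positions sit at $(\infty,\infty)$, but $S=\{K_a,k_b\}$ requires the explicit distance formula for two-king positions to a lone position and an injectivity failure argument). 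The same modelling error mildly infects your $n\le 3$ lower bound (the fix there is immediate: a single $K_m$ leaves the $n\ge 2$ vertices $k_1,\dots,k_n$ all at distance $\infty$), and it would also change the meaning of $\ell_0$ in your proposed basis, which should be taken to be $K_1$, matching the paper. Until the $S\subseteq L$ case is repaired with the correct structure of $L$, the lower bound for $n\ge 4$ is not established.
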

\begin{proof}
The graph $G=G(K(1,2))$ has vertices $\{K_1k_2,K_2k_1, K_1,K_2,k_1,k_2\}$. Each vertex either has two out-neighbors, two in-neighbors, or at least two other vertices at distance $\infty$. So any resolving set must have at least $2$ vertices. $\{K_2,k_1\}$ suffices.

 If $n=3$ then $G=G(K(1,3))$ has order $12$ as seen in Figure~\ref{fig:kstrip}. As above, any position with a single king is at distance $\infty$ to all positions with only the other king, and any position with both kings has two positions at equal distance, each containing a single king. Thus a resolving set must contain at least $2$ vertices. Let $S = \{K_1,k_3\}$. Single-king positions are easily identifiable by their distances from $S$. The unique distance vectors are in Figure~\ref{table:stripking}.  

\begin{figure}
\centering
\begin{tabular}{|r|l|l|}
\hline
$v$ & $d(K_1,v)$ & $d(k_3,v)$ \\
\hline
$K_1k_2$ & $-2$ & $-3$ \\
$K_1k_3$ & $-3$ & $-3$ \\
$K_2k_1$ & $-1$ & $-2$ \\
$K_2k_3$ & $-3$ & $-2$ \\
$K_3k_1$ & $-2$ & $-2$ \\
$K_3k_2$ & $-2$ & $-1$ \\
$K_2$\hspace{.13in} & $1$ & $\infty$ \\
$K_3$\hspace{.13in} & $2$ & $\infty$ \\
$k_1$ & $\infty$ & $2$ \\
$k_2$ & $\infty$ & $1$ \\
\hline 
\end{tabular}
\caption{Table of distances from two landmark vertices in $G(K(1,2))$}\label{table:stripking}
\end{figure}

Now consider $n>3$ and assume that $S$ resolves $G=G(K(1,n))$. We know that $|S|>1$ from the argument above. If $|S|=2$ then $S$ must have two positions with both kings, $K_ik_j$ and $K_ak_b$ with $i<j,b<a$, as there are too many positions at distance $\infty$ for either position in $S$ to represent only one. But then each single-king position $K_c$ is reachable in the same distance as another single-king position $k_d$ from each of the positions in $S$. So $|S|>2$.

Now let $S=\{K_1,K_1k_2,K_2k_1\}$ and $v$ a vertex in $G$. If $d(K_1,v)=\infty$ then $v=k_i$ for some $i$ and the distance $d(K_1k_2,v)=i$. If $d(K_1k_2,v)=\infty$ then $v=K_ik_j,i>j$, $(i-1) = -d(K_1,v)$, and $j=d(K_1,v)+d(K_2k_1)+2$. If $d(K_2k_1)=\infty$ then $v=K_ik_j,i<j$. Here $j=d(K_1,v)$ and $i=d(K_1k_2) = -(i+j)+3$. Thus $S$ resolves $G$ and $\beta(G) = 3$.  
\end{proof}

Game graphs of this type can get quite large very quickly. As an example we present in Figure~\ref{fig:king2x2} the graph $G(K(2,2))$ and a metric basis without proof.

\begin{figure}
\centering
\includegraphics[width=.5\textwidth]{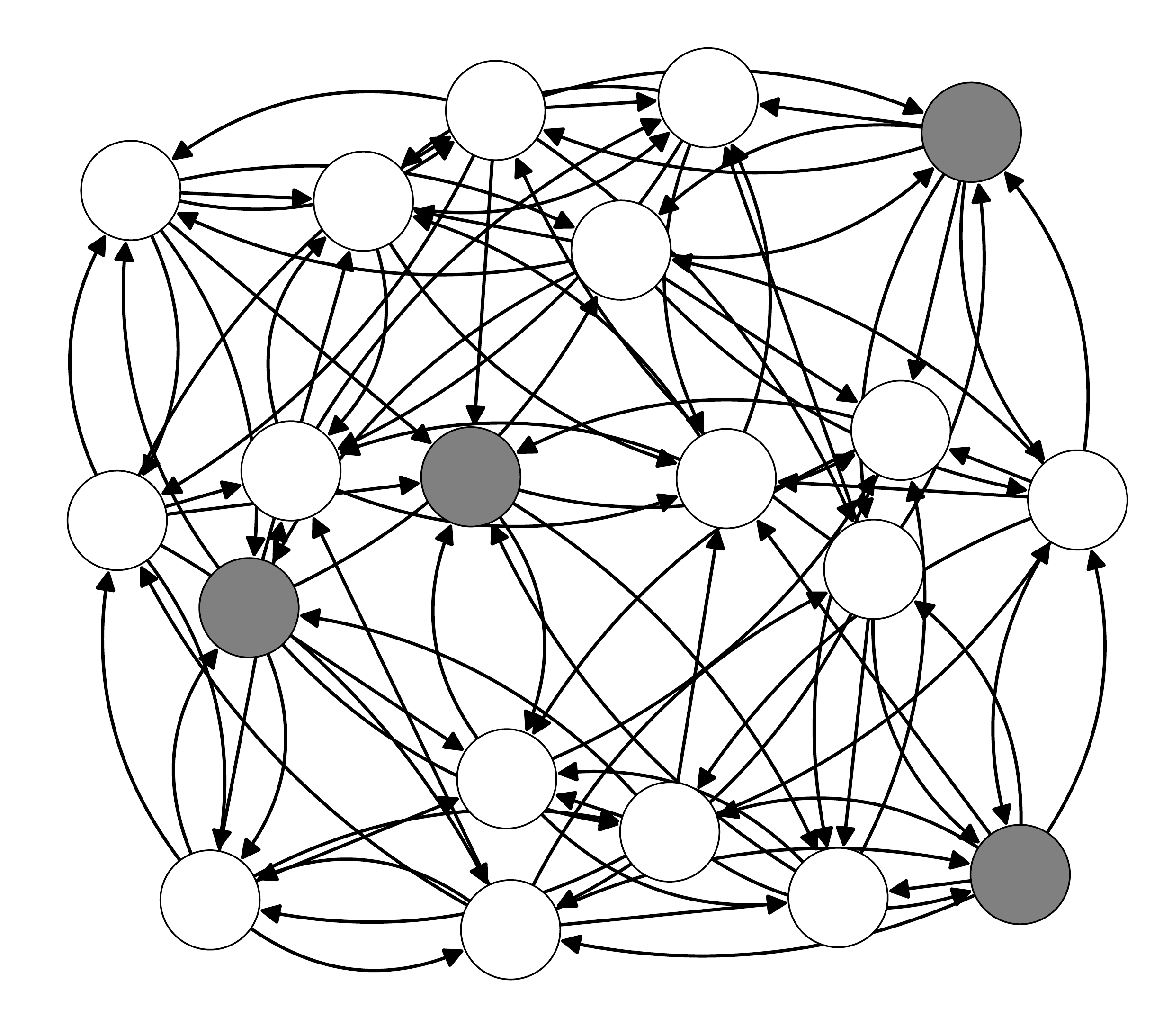}
\caption{The game graph associated with the game consisting of two opposing kings on a $2\times 2$ chessboard}\label{fig:king2x2}
\end{figure}

\textsc{Fox and Geese} is a commonly studied loopy game. Played on a checkerboard, the pieces are a single fox which begins at the bottom of the board and moves as a king in \textsc{checkers} (one space in any diagonal direction), and a set of geese which begin at the top and move as regular pieces in \text{checkers} (one space diagonally towards the bottom of the board). All pieces begin and remain on dark squares. Play proceeds until either the fox is trapped or the geese have exhausted their moves. For the most comprehensive CGT analysis see \cite{siegel2013combinatorial}. 

We can generalize this to an $x\times y$ grid and assume that the number of geese is equal to $g = \lceil \frac{x}{2} \rceil$, (see Figure~\ref{fig:foxgeeseboard}). The number of vertices in the associated game graph depends on the dimensions of the board. There are $s = \lceil \frac{xy}{2} \rceil$ usable squares. The fox is placed on one space and $\lceil \frac{y}{2} \rceil$ geese are placed on the remainder. Therefore the order of the game graph is $s \binom{s-1}{g}$. 

The game graph $G$ representing \textsc{Fox and Geese} on a $3\times 3$ board, shown in Figure~\ref{fig:foxgeeseboard}, has order $30$ and size $48$. One example of a metric basis, highlighted in the figure, is the positions given in the table in Figure~\ref{table:fg}.

\begin{figure}
\centering
\includegraphics[width=.3\textwidth]{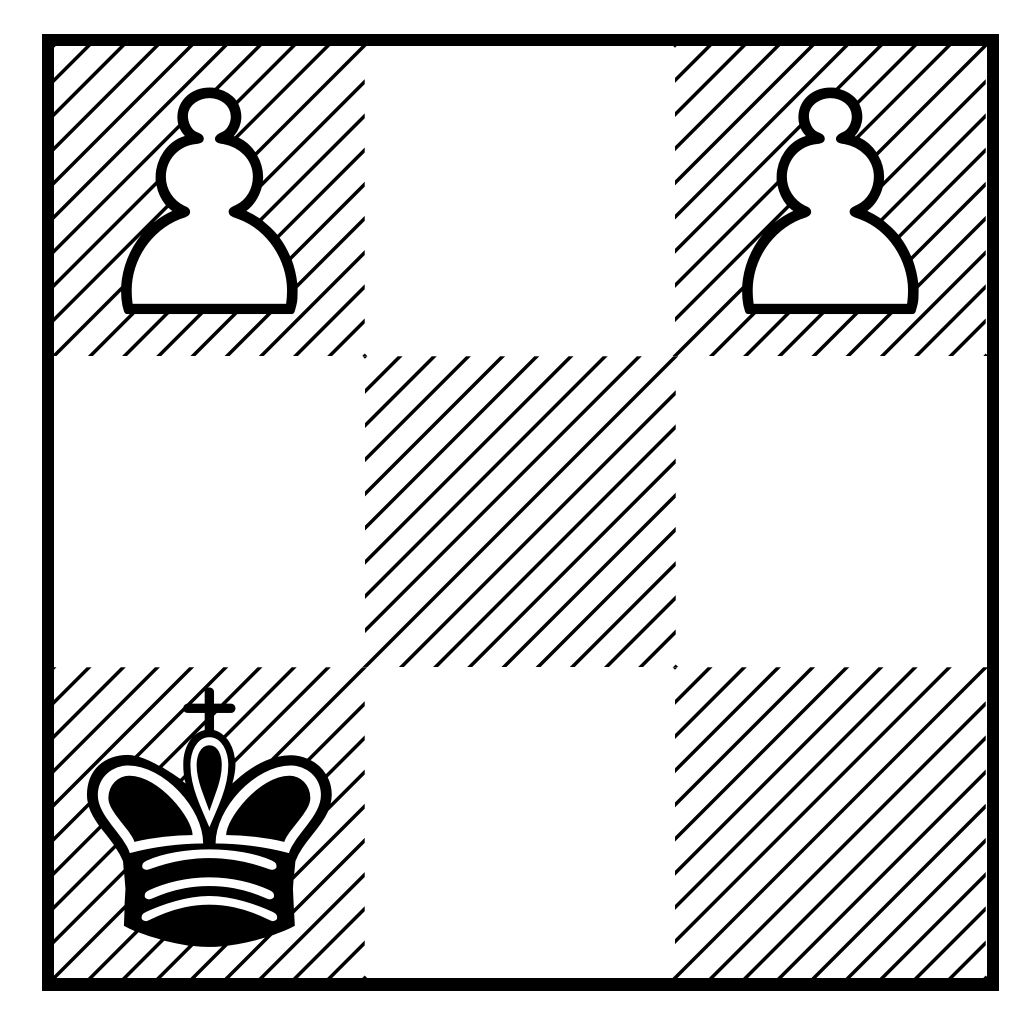}
\includegraphics[width=.4\textwidth]{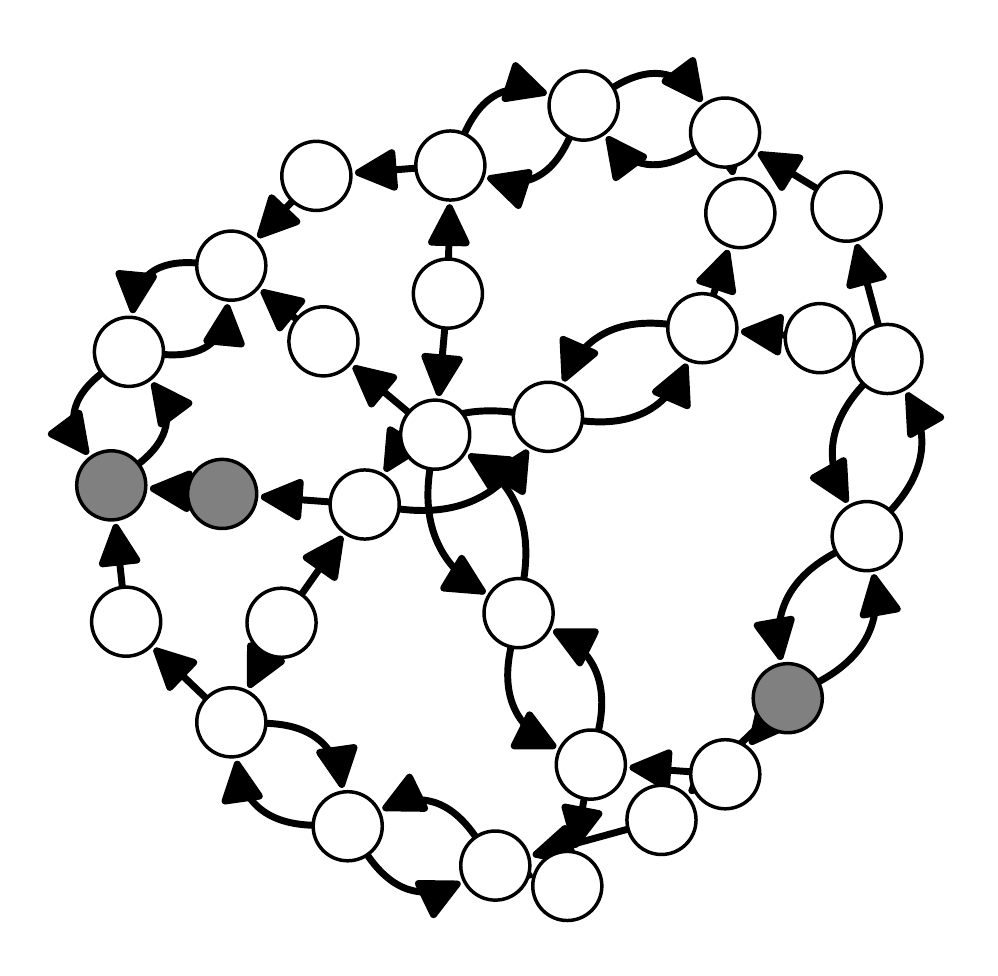}
\caption{Fox and Geese played with a king and pawns, respectively, on a $3\times 3$ board and the associated graph with a metric basis highlighted}\label{fig:foxgeeseboard}
\end{figure}

\begin{figure}
\centering
\begin{tabular}{|c|c|c|c|}
\hline
vertex & fox & goose $1$ & goose $2$ \\
\hline
$v_1$ & $(1, 3)$ & $(1, 1)$ & $(2, 2)$ \\  
$v_2$ & $(1, 3)$ & $(1, 1)$ & $(3, 1)$ \\  
$v_3$ & $(1, 1)$ & $(1, 3)$ & $(3, 3)$ \\
\hline
\end{tabular}
\caption{A metric basis for \textsc{fox and geese} on a $3\times 3$ board}\label{table:fg}
\end{figure}

\section{Other games and future directions}\label{sec:conclusion}

Many games that are not combinatorial can be analyzed in the manner this paper has described. For example, the positions in \textsc{tic-tac-toe} can be associated with vertices in a game graph although the game itself does not meet the ending conditions of CGT. Similarly the \textsc{towers of hanoi}, a solitaire game, has an associated graph that is rather well-studied. These game graphs are ripe for study.  

Within this paper we have only analyzed impartial games, whether traditionally impartial or modified to be so. It would be interesting to see an examination of partisan games, although yet another concept of graph distance would need to be developed; one that takes into account not only directional distance but also edge colors. 
	
Finally, while we looked at \textsc{nim} heaps of size one and two, the question of metric dimension of the game on an arbitrary number of heaps remains unanswered.

\bibliography{MD-game-graphs}
\bibliographystyle{abbrv}

\end{document}